\newtheorem{thm}{Theorem}[section]
\newtheorem*{thm*}{Theorem}
\newtheorem{lemma}{Lemma}[section]
\newtheorem{prop}{Proposition}[section]
\newtheorem{df}{Definition}[section]
\newtheorem{ques}{Question}[section]
\theoremstyle{remark}
\newtheorem{rem}{Remark}[section]
\newcommand{\Ric}{\mbox{Ric}}
\newcommand{\R}{\mathbb R}
\newcommand{\Z}{\mathbb Z}
\numberwithin{equation}{section}
\newcommand{\be}{\begin{equation}}
\newcommand{\ee}{\end{equation}}
\def\p{\partial}
\def\M{\mathcal{M}}
\def\lf{\left}
\def\ri{\right}
\def\Pi{\displaystyle{\mathbb{II}}}
\def\Ric{\text{\rm Ric}}
\def\m{\mathfrak{m}}
\def\bee{\begin{equation*}}
\def\eee{\end{equation*}}
\def\mby{\mathfrak{m}_{_{BY}}}
\def\H{\mathbb{H}}
\def\F{\mathring{\mathcal{F}}}
\def\L{\mathring{\Lambda}}
\def\So{\Sigma_{O}}
\def\Sh{\Sigma_{H}}
\def\FF{\mathcal{F}}
\def\LL{\Lambda}
\newcounter{mnotecount}[section]
\begin{document}

\title{
Total mean curvature, scalar curvature, and a variational analog of Brown-York mass
}

\author{Christos Mantoulidis}
\address[Christos Mantoulidis]{Department of Mathematics, Stanford University, Stanford, CA 94305,   USA.}
\email{c.mantoulidis@math.stanford.edu}

\author{Pengzi Miao}
\address[Pengzi Miao]{Department of Mathematics, University of Miami, Coral Gables, FL 33146, USA.}
\email{pengzim@math.miami.edu}

\thanks{The first named author's research was partially supported by the Ric Weiland Graduate Fellowship at Stanford University. The second named author's research was partially supported by the Simons Foundation Collaboration Grant for Mathematicians \#281105.}

%\renewcommand{\subjclassname}{
%  \textup{2010} Mathematics Subject Classification}
%\subjclass[2010]{Primary ?????; Secondary ?????}

\date{}

\begin{abstract}
We study  the supremum of the total mean curvature on the boundary of compact, mean-convex 3-manifolds with nonnegative scalar curvature,  and a  prescribed  boundary metric. 
We establish an additivity property for this supremum  and   exhibit rigidity for maximizers assuming the supremum is attained.
 When the boundary consists of 2-spheres, we demonstrate that the finiteness of the supremum 
 follows from the previous work of Shi-Tam and Wang-Yau on the quasi-local mass problem in general relativity. 
In turn, we define  a variational analog of Brown-York quasi-local mass 
without assuming that  the boundary 2-sphere has positive Gauss curvature.  
\end{abstract}

\maketitle

\markboth{Christos Mantoulidis and Pengzi Miao}
{Total mean curvature, scalar curvature}

\section{Introduction and statement of results}

Brown and York (\cite{BY1}, \cite{BY2}) formulated a definition of quasi-local mass in general relativity
by employing a Hamilton-Jacobi analysis of the Einstein-Hilbert action. 
Given  a compact spacelike hypersurface $\Omega$ in a spacetime, assuming its boundary $\p \Omega$ is a 2-sphere with positive Gauss curvature,
 the Brown-York mass of $\p \Omega$ is given by 
\be \label{eq-BY}
 \m_{_{BY}}(\p \Omega; \Omega) = \frac{1}{8\pi} \int_{\partial \Omega} (H_0 - H) \, d\sigma .
\ee
Here $d\sigma$ is the induced area element on $\p \Omega$, $H$ is the mean curvature of $\p \Omega$ in $\Omega$, 
and $H_0$ is the mean curvature of the isometric embedding of $\p \Omega$ into Euclidean space, $ \R^3$. 
The existence and uniqueness of such an embedding of $ \p \Omega$ is guaranteed when $\p \Omega $ has positive  Gauss curvature, by the solution to Weyl's embedding problem (\cite{Nirenberg53, Pogorelov}).

 In \cite{ShiTam02},  Shi and Tam  proved  the following theorem
 which implies the positivity of $\m_{_{BY}}(\p \Omega; \Omega) $ when  $\Omega$ 
has nonnegative scalar curvature.

\begin{thm}[\cite{ShiTam02}] \label{thm-ShiTam} 
Let $(\Omega, g)$ be a compact, connected, Riemannian 3-manifold with nonnegative scalar curvature, and with nonempty boundary $\p \Omega$.
Suppose $\p \Omega$ has finitely many  components $ \Sigma_j$, $ j = 1, \ldots, k $, so that each $ \Sigma_j$  is a topological 2-sphere which has positive Gauss curvature and positive mean curvature $H_{g,j}$.
Then 
\be \label{eq-ST} 
\int_{\Sigma_j} H_{g,j} d \sigma_j \le \int_{{\Sigma}_j} {H}_{0,j} d {\sigma_j},
\ee
where $ d \sigma_j $ denotes the induced area element on $\Sigma_j$, and $H_{0,j}$ is the mean curvature of the isometric embedding of $\Sigma_j$ in $\R^3$.
Moreover, equality in \eqref{eq-ST} holds for some  $\Sigma_j$ if and only if $\partial \Omega$ has a unique connected component and $(\Omega, g)$ is isometric to a convex domain in $\R^3$.
\end{thm}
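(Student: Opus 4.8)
The plan is to reproduce the argument of Shi and Tam \cite{ShiTam02}, which deduces \eqref{eq-ST} from the Riemannian positive mass theorem by constructing, in the exterior of the isometric embeddings, a scalar-flat asymptotically flat ``filling'' whose boundary data matches that of $\partial\Omega$. Fix one component $\Sigma_j$. Since $\Sigma_j$ has positive Gauss curvature, the solution of Weyl's embedding problem (Nirenberg, Pogorelov \cite{Nirenberg53,Pogorelov}) produces an isometric embedding of $(\Sigma_j,\sigma_j)$ onto a strictly convex surface $\Sigma_{0,j}\subset\R^3$ bounding a convex body $B_j$; let $E_j=\R^3\setminus\mathrm{int}\,B_j$ be its exterior. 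I would foliate $E_j$ by the outward equidistant surfaces $\Sigma_{r,j}$, $r\ge 0$, so that $g_{\R^3}=dr^2+\sigma_{r,j}$ on $E_j$, where $\sigma_{r,j}$ is the induced metric on $\Sigma_{r,j}$; each leaf is strictly convex, with positive Euclidean mean curvature $H^{(r)}_{0,j}$ (with $H^{(0)}_{0,j}=H_{0,j}$ the mean curvature appearing in \eqref{eq-ST}) and positive Gauss curvature $K_{r,j}$ satisfying the pointwise bound $(H^{(r)}_{0,j})^2\ge 4K_{r,j}$.

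On $M_j:=[0,\infty)\times\Sigma_j$ I would look for a \emph{scalar-flat} metric of the quasi-spherical form $g_j=u_j^2\,dr^2+\sigma_{r,j}$. By Bartnik's computation, $R(g_j)=0$ is equivalent to a scalar parabolic equation for $u_j$ along the foliation, of the schematic form $H^{(r)}_{0,j}\,\partial_r u_j=u_j^2\,\Delta_{\sigma_{r,j}}u_j+\text{(lower-order terms)}$, uniformly parabolic precisely because the foliation is mean-convex, $H^{(r)}_{0,j}>0$. I would solve it with initial value $u_j(0,\cdot)=H_{0,j}/H_{g,j}$, which is a smooth \emph{positive} function exactly because $H_{g,j}>0$. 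The maximum principle keeps $u_j$ positive and bounded for all $r\ge 0$, an asymptotic analysis shows $(M_j,g_j)$ is asymptotically flat with well-defined ADM mass $m_j$, and the initial value is chosen so that the mean curvature of $\Sigma_{0,j}$ in $(M_j,g_j)$, namely $H_{0,j}/u_j(0,\cdot)=H_{g,j}$, matches that of $\Sigma_j$ in $(\Omega,g)$.

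The heart of the argument is a monotonicity statement for the Brown--York--type quantity along the foliation,
\[
m_j(r):=\frac{1}{8\pi}\int_{\Sigma_{r,j}}\bigl(1-u_j^{-1}\bigr)H^{(r)}_{0,j}\,d\sigma_{r,j}.
\]
Differentiating in $r$, substituting the quasi-spherical equation, and integrating by parts on each leaf, one rewrites $m_j'(r)$ as a manifestly nonpositive expression; the nonpositivity relies on the Gauss--Bonnet identity $\int_{\Sigma_{r,j}}K_{r,j}\,d\sigma_{r,j}=4\pi$ (using that the leaves are topological $2$-spheres), the pointwise inequality $(H^{(r)}_{0,j})^2\ge 4K_{r,j}>0$ on the convex leaves, and a Poincar\'e/eigenvalue estimate for the Laplacian on them. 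A separate asymptotic computation identifies $\lim_{r\to\infty}m_j(r)=m_j$. Since $m_j(0)=\frac1{8\pi}\int_{\Sigma_j}(H_{0,j}-H_{g,j})\,d\sigma_j$, monotonicity gives $\frac{1}{8\pi}\int_{\Sigma_j}(H_{0,j}-H_{g,j})\,d\sigma_j\ge m_j$.

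It remains to see $m_j\ge 0$ and to read off rigidity. I would glue $(\Omega,g)$ to the fillings $(M_j,g_j)$ along the $\Sigma_j$'s, producing a complete asymptotically flat manifold $\widehat M$ with $k$ ends whose metric is Lipschitz across the $\Sigma_j$'s and smooth elsewhere: the induced metrics match by construction, and the two-sided mean curvatures match by the choice of initial data, so there is nonnegative scalar curvature on $\Omega$, zero scalar curvature on the fillings, and no negative distributional contribution along the corners. The positive mass theorem in this corner setting (carried out in \cite{ShiTam02}, e.g.\ by mollification) gives $m_j\ge 0$ for each end, whence $\int_{\Sigma_j}H_{g,j}\,d\sigma_j\le\int_{\Sigma_j}H_{0,j}\,d\sigma_j$, which is \eqref{eq-ST}. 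If equality holds for some $j$, then $m_j(0)=0$; since $m_j(\cdot)$ is non-increasing with limit $m_j\ge 0$, this forces $m_j(r)\equiv 0$ and $m_j=0$, so the rigidity case of the positive mass theorem makes $\widehat M$ isometric to $\R^3$. Having a single end forces $k=1$, and $(\Omega,g)$ then sits inside $\widehat M=\R^3$ as the convex body $B_1$ bounded by $\Sigma_{0,1}$, i.e.\ as a convex domain; the converse is immediate since the isometric embedding of a convex surface is unique, so a convex domain automatically has $H_{g,1}=H_{0,1}$. I expect the main obstacle to be the monotonicity formula for $m_j(r)$ --- both the integration-by-parts identity and the geometric inequalities on the convex leaves that make $m_j'(r)\le 0$ --- with the positive mass theorem across Lipschitz corners (and its rigidity case) a secondary technical point.
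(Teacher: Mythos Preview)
The paper does not give its own proof of this statement; Theorem~\ref{thm-ShiTam} is quoted from \cite{ShiTam02} as background and is used only as a black box. Your proposal is a faithful outline of the original Shi--Tam argument: the quasi-spherical scalar-flat extension on the Euclidean exterior of the Weyl embedding, the monotonicity of $m_j(r)$ along the equidistant foliation with $\lim_{r\to\infty}m_j(r)=m_j$, the gluing of $(\Omega,g)$ to the extensions to produce an asymptotically flat manifold with Lipschitz corners, and the positive mass theorem in that setting together with its rigidity case.

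One small correction on the monotonicity step: in Shi--Tam's computation, after substituting the parabolic equation and integrating the Laplacian term away, one obtains (up to normalization)
\[
m_j'(r)\;=\;-\frac{1}{8\pi}\int_{\Sigma_{r,j}}\frac{(u_j-1)^2}{u_j}\,K_{r,j}\,d\sigma_{r,j}\;\le\;0,
\]
which is nonpositive simply because the convex leaves have $K_{r,j}>0$ pointwise. Neither the Gauss--Bonnet identity nor a Poincar\'e/eigenvalue estimate is needed here, and the inequality $(H^{(r)}_{0,j})^2\ge 4K_{r,j}$ plays no role in this particular step. This does not affect the validity of your overall plan.
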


\begin{rem}
Our convention for the mean curvature $H$  is that mean convexity is equivalent to $H > 0$. We will often emphasize the dependence of a mean curvature on the interior metric by using the metric as a subscript; e.g., $H_g$.
\end{rem}

In this paper, we consider questions concerning  the total boundary mean curvature  of a general compact  Riemannian 3-manifold 
with nonnegative scalar curvature which are motivated by Theorem \ref{thm-ShiTam}. 
As an application of our results, we give a variational analog of Brown-York mass that is free of the positive Gauss curvature restriction on the boundary.

\subsection{Variational results}

We first introduce the relevant definitions  before stating the main results. 

\begin{df}[Fill-ins]
\label{df-F-multiple}
Let $\Sigma_1, \ldots, \Sigma_k$ be $k \geq 1$ closed, connected, orientable surfaces endowed with Riemannian metrics $\gamma_1, \ldots, \gamma_k$. 
Denote by $\FF_{(\Sigma_1, \gamma_1), \ldots, (\Sigma_k, \gamma_k)}$ the set of all compact, connected Riemannian 3-manifolds $(\Omega, g)$ with boundary such that:
\begin{enumerate}
\item $\partial \Omega$, with the induced metric, is isometric to the disjoint union of $(\Sigma_j, \gamma_j)$, $j = 1, \ldots, k$,
\item $\partial \Omega$ is mean-convex; i.e., the mean curvature vector of $\partial \Omega$ points inward, and 
\item $R(g) \geq 0$, where $R(g)$ is the scalar curvature of $g$.
\end{enumerate}
\end{df}

\begin{rem}
In Definition \ref{df-F-multiple}, we do not prescribe the mean curvature on the boundary, other than to require it to be positive. 
For a definition of ``fill-ins" that prescribes $H$, we refer the reader to  \cite{JJ}.
\end{rem}

Given the set $\FF = \FF_{(\Sigma_1, \gamma_1), \ldots, (\Sigma_k, \gamma_k)}$, we define
\be \label{eq-def-L-multiple}
\LL_{(\Sigma_1, \gamma_1), \ldots, (\Sigma_k,\gamma_k)} := \sup \lf\{ \frac{1}{8 \pi}  \int_{\p \Omega} H_g d \sigma \ | \ (\Omega, g) \in \FF \ri\} .
\ee
\begin{rem}
    The supremum of an empty set is conventionally $-\infty$.
\end{rem}
In this notation, Theorem \ref{thm-ShiTam} implies that when $\Sigma$ is a 2-sphere and $\gamma_1, \ldots, \gamma_k$ are Riemannian metrics on $\Sigma$ with positive Gauss curvature, then for every fill-in $(\Omega, g) \in \FF_{(\Sigma_1,\gamma_1), \ldots, (\Sigma_k, \gamma_k)}$, 
\be \label{eq-ST-notation} \int_{\Sigma_j} H_{g,j} \, d\sigma_j \leq 8 \pi \LL_{(\Sigma, \gamma_j)} = \int_{\Sigma_j} H_{0,j} d\sigma_j
\ee
for all $j = 1, \ldots, k$ and, therefore,
\[ \LL_{(\Sigma,\gamma_1),\ldots,(\Sigma,\gamma_k)} \leq \sum_{j=1}^k \LL_{(\Sigma,\gamma_j)} \text{.} \]
Moreover, if equality holds in \ref{eq-ST-notation} for some $j$, then $k = 1$ and $(\Omega, g)$ is necessarily the unique flat fill-in $(B^3, \delta) \in \FF_{(\Sigma, \gamma_1)}$ from Weyl's embedding problem.

The geometric significance of inequality \eqref{eq-ST-notation} is in that its right side is a quantity  that is determined only by the induced metric on the boundary component $\Sigma_j \subset \p \Omega$; it is independent of the interior of the manifold, and independent of all other boundary components.

\begin{ques}
If we drop the positivity requirement on the Gauss curvature of the boundary 2-spheres, is there still a bound for the total mean curvature of the boundary that is intrinsic? Can we characterize maximizers of the total mean curvature?
\end{ques}

Regarding the functional  $\LL_{(\Sigma_1, \gamma_1), \ldots, (\Sigma_k, \gamma_k)}$, we have

\begin{thm}[Additivity]
\label{thm-main-additivity}
    Let $\Sigma_1, \ldots, \Sigma_k$ be $k \geq 2$ closed, connected, orientable surfaces endowed with Riemannian metrics $\gamma_1, \ldots, \gamma_k$. For all fill-ins $(\Omega, g) \in \FF_{(\Sigma_1, \gamma_1), \ldots, (\Sigma_k, \gamma_k)}$, and for every $j = 1, \ldots, k$,
\be \label{ineq-single-sup}
 \int_{\Sigma_j} H_{g,j} \, d\sigma_j \leq 8 \pi \LL_{(\Sigma_j, \gamma_j)} \text{,} 
 \ee
where $d \sigma_j$ denotes the area element on $\Sigma_j$. Moreover,
\be \label{eq-additivity} \LL_{(\Sigma_1, \gamma_1), \ldots, (\Sigma_k, \gamma_k)} = \sum_{j=1}^k \LL_{(\Sigma_j,\gamma_j)} \text{,}
\ee
provided $\FF_{(\Sigma_j, \gamma_j)}$, $j = 1, \ldots, k$, and $\FF_{(\Sigma_1,\gamma_1),\ldots,(\Sigma_k,\gamma_k)}$ are all nonempty.
\end{thm}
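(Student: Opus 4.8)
The plan is to prove the two halves of the theorem in tandem, since the upper bound \eqref{ineq-single-sup} and the ``$\leq$'' direction of \eqref{eq-additivity} are really one statement, while the ``$\geq$'' direction is a gluing construction.

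\emph{The upper bound.} Fix a fill-in $(\Omega, g) \in \FF_{(\Sigma_1,\gamma_1),\ldots,(\Sigma_k,\gamma_k)}$ and fix an index $j$. The idea is to ``cap off'' all boundary components except $\Sigma_j$ using \emph{near-minimizing} fill-ins of the other components, so as to produce a new fill-in whose boundary is just $(\Sigma_j,\gamma_j)$, with total mean curvature on $\Sigma_j$ essentially unchanged. Concretely, for each $i \neq j$ and each $\e > 0$, pick $(\Omega_i, g_i) \in \FF_{(\Sigma_i,\gamma_i)}$ with $\frac{1}{8\pi}\int_{\Sigma_i} H_{g_i}\,d\sigma_i \geq \LL_{(\Sigma_i,\gamma_i)} - \e$ (possible whenever $\LL_{(\Sigma_i,\gamma_i)} > -\infty$, which holds since $\FF_{(\Sigma_i,\gamma_i)} \neq \emptyset$; the case $\LL = +\infty$ must be handled too, but there \eqref{ineq-single-sup} is vacuous for that index and the argument for index $j$ is unaffected). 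The mean curvatures from the two sides along each $\Sigma_i$ will not match, so one cannot glue directly; instead I would insert a thin \emph{collar} $\Sigma_i \times [0,1]$ carrying a metric that interpolates between the two induced geometries and whose scalar curvature stays nonnegative, following the collar-gluing technique (à la the bending/smoothing constructions used in the Bartnik-data and Shi--Tam literature, e.g.\ the ``corner-smoothing'' of Miao and the positive-scalar-curvature collars of Mantoulidis--Schoen). Because the boundary metric on each $\Sigma_i$ is held fixed on both ends of the collar, the collar can be made arbitrarily thin, its contribution to any mean-curvature integral on $\Sigma_j$ is zero, and the net effect is a fill-in $(\widehat\Omega,\widehat g) \in \FF_{(\Sigma_j,\gamma_j)}$ with $\int_{\Sigma_j} H_{\widehat g}\,d\sigma_j$ as close as we like to $\int_{\Sigma_j} H_g\,d\sigma_j$. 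Taking the supremum over such constructions and then $\e \to 0$ gives \eqref{ineq-single-sup}. Summing \eqref{ineq-single-sup} over $j$ and taking the supremum over $(\Omega,g) \in \FF_{(\Sigma_1,\gamma_1),\ldots,(\Sigma_k,\gamma_k)}$ yields $\LL_{(\Sigma_1,\gamma_1),\ldots,(\Sigma_k,\gamma_k)} \leq \sum_j \LL_{(\Sigma_j,\gamma_j)}$.

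\emph{The lower bound.} For the reverse inequality, given near-maximizing fill-ins $(\Omega_j, g_j) \in \FF_{(\Sigma_j,\gamma_j)}$ with $\frac{1}{8\pi}\int_{\Sigma_j} H_{g_j}\,d\sigma_j \geq \LL_{(\Sigma_j,\gamma_j)} - \e$, I would assemble a single connected fill-in realizing all $k$ boundary components at once. Take $k$ small geodesic balls inside a large flat region, or more simply take a ``central'' auxiliary piece and attach each $(\Omega_j,g_j)$ to it along an interior $2$-sphere; the attaching is done again via thin positive-scalar-curvature collars so that after smoothing the corners (decreasing the mean curvature slightly, which only helps, or smoothing exactly as in Miao's corner result to keep $R \geq 0$ distributionally and then mollifying) one obtains $(\Omega,g) \in \FF_{(\Sigma_1,\gamma_1),\ldots,(\Sigma_k,\gamma_k)}$ whose boundary mean-curvature integral is $\sum_j \int_{\Sigma_j} H_{g_j}\,d\sigma_j$ up to an error controlled by the collars. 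Letting $\e \to 0$ gives $\LL_{(\Sigma_1,\gamma_1),\ldots,(\Sigma_k,\gamma_k)} \geq \sum_j \LL_{(\Sigma_j,\gamma_j)}$, and combined with the previous paragraph this proves \eqref{eq-additivity}.

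\emph{Main obstacle.} The crux in both directions is the \emph{collar construction}: producing, on $\Sigma \times [0,\delta]$ with a prescribed metric $\gamma$ on each end, a metric that (i) is an exact product near each end (so it glues smoothly to the given manifolds with only a mean-curvature corner, or no corner at all), (ii) has nonnegative scalar curvature throughout, and (iii) has controlled boundary mean curvatures and can be taken arbitrarily thin. One must be careful that mean-convexity ($H > 0$) is preserved at the relevant boundary after smoothing. I expect this to reduce to a one-parameter family of the standard ``warped-collar'' ansatz $dt^2 + u(t)^2\,\gamma + \ldots$ together with Miao's observation that a mean-curvature jump in the correct (concave) direction across a hypersurface is compatible with distributional $R \geq 0$ and can be smoothed; verifying the signs and the positivity of $H$ on $\partial\Omega$ after all the gluings is where the real work lies, but it is by now a well-understood technique.
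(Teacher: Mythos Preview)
Your lower-bound (``$\geq$'') argument is essentially the paper's: pick near-maximizers in each $\FF_{(\Sigma_j,\gamma_j)}$ and connect-sum. One point you glossed over is that the Gromov--Lawson/Schoen--Yau neck requires \emph{strictly} positive scalar curvature, so the paper first conformally perturbs each $(\Omega_j,g_j)$ to $R>0$ before connect-summing; you should do the same.

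Your upper-bound strategy, however, is genuinely different from the paper's and has a logical gap. You propose to cap off the components $\Sigma_i$, $i\neq j$, by gluing on fill-ins $(\Omega_i,g_i)\in\FF_{(\Sigma_i,\gamma_i)}$. This does work when such fill-ins exist: since both sides are mean-convex at $\Sigma_i$, the corner has the favorable sign and Miao's smoothing (plus a small conformal correction) produces an element of $\FF_{(\Sigma_j,\gamma_j)}$ with mean curvature on $\Sigma_j$ as close to $H_{g,j}$ as desired. But inequality \eqref{ineq-single-sup} is asserted for every $(\Omega,g)\in\FF_{(\Sigma_1,\gamma_1),\ldots,(\Sigma_k,\gamma_k)}$ \emph{without} the hypothesis that the individual $\FF_{(\Sigma_i,\gamma_i)}$ are nonempty; that nonemptiness is something the paper actually \emph{proves} along the way (this is the content of the Remark following the theorem). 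Your argument assumes it, so it is circular at this point. The paper avoids this by working entirely inside $(\Omega,g)$: it perturbs to $R>0$, cuts along an area-minimizing surface in the homology class $[S_j]$ to obtain an element of $\F_{(\Sigma_j,\gamma_j)}$ whose extra boundary consists of stable minimal $2$-spheres (stability plus $R>0$ forces spheres via Fischer-Colbrie--Schoen), and then fills those spheres with positive-scalar-curvature $3$-balls to land in $\FF_{(\Sigma_j,\gamma_j)}$. This cut-and-fill manufactures the needed element of $\FF_{(\Sigma_j,\gamma_j)}$ out of $(\Omega,g)$ alone.

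Two smaller remarks on your capping construction. First, the ``collar'' you worry about is unnecessary: the induced metric on $\Sigma_i$ is $\gamma_i$ from both sides, so there is nothing to interpolate---you glue directly and invoke Miao's smoothing for the mean-curvature jump, exactly as in the paper's Lemmas \ref{lem-filling} and \ref{lem-doubling}. Second, there is no reason to choose \emph{near-maximizing} caps $(\Omega_i,g_i)$ when proving \eqref{ineq-single-sup}; any element of $\FF_{(\Sigma_i,\gamma_i)}$ works, since after capping you only look at the mean curvature along $\Sigma_j$, which the caps do not touch.
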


\begin{rem}
In the course of the proof we will see that $\FF_{(\Sigma_1,\gamma_1),\ldots,(\Sigma_k,\gamma_k)}$ is empty if and only if some $\FF_{(\Sigma_j,\gamma_j)}$, $j = 1, \ldots, k$, is empty.  
\end{rem}

The theorem above roughly allows us to reduce our study to $k = 1$ boundary component. 
We prove this theorem by employing a cut-and-fill technique for manifolds with positive scalar curvature.

\begin{thm} [Finiteness] \label{thm-main-finiteness}
If $\Sigma$ is a 2-sphere endowed with an arbitrary Riemannian metric $\gamma$, then 
$$\LL_{(\Sigma, \gamma)} < \infty.$$
Consequently, for all Riemannian metrics $\gamma_1, \ldots, \gamma_k$ on a 2-sphere $\Sigma$, 
\[ \LL_{(\Sigma,\gamma_1),\ldots,(\Sigma,\gamma_k)} < \infty \text{.} \]    
\end{thm}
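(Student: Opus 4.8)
The plan is to reduce to a single boundary component and then, for an arbitrary metric $\gamma$ on the $2$-sphere $\Sigma$, to bound the total boundary mean curvature of every fill-in by comparing against an isometric embedding of $(\Sigma,\gamma)$ into a hyperbolic space of sufficiently negative curvature, via the hyperbolic analog of the Shi--Tam monotonicity of \cite{ShiTam02}. The ``consequently'' clause follows at once: for $k \ge 2$, Theorem \ref{thm-main-additivity} together with the remark following it says that $\LL_{(\Sigma,\gamma_1),\ldots,(\Sigma,\gamma_k)}$ equals $\sum_{j=1}^k \LL_{(\Sigma,\gamma_j)}$ when all $\FF_{(\Sigma,\gamma_j)}$ are nonempty and equals $-\infty$ otherwise, while for $k = 1$ it is the single-component statement itself; in all cases finiteness reduces to $\LL_{(\Sigma,\gamma)} < \infty$. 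Since an empty $\FF_{(\Sigma,\gamma)}$ gives $\LL_{(\Sigma,\gamma)} = -\infty$, we may assume $\FF_{(\Sigma,\gamma)} \ne \emptyset$, and it suffices to produce a constant $C = C(\Sigma,\gamma)$ with $\int_{\p\Omega} H_g\, d\sigma \le C$ for every $(\Omega,g) \in \FF_{(\Sigma,\gamma)}$.

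First I would fix the comparison geometry. Because $\Sigma$ is compact, $K_\gamma$ is bounded below, so we may choose $\kappa > 0$ with $K_\gamma > -\kappa^2$ everywhere on $\Sigma$. By the solution of the Weyl embedding problem in hyperbolic space (Pogorelov), $(\Sigma,\gamma)$ then embeds isometrically into $\H^3(-\kappa^2)$ as the boundary $\Sigma_0$ of a convex body, with mean curvature $H_0 > 0$; fix one such embedding. Since $\H^3(-\kappa^2)$ has scalar curvature $-6\kappa^2 \le 0$, every fill-in $(\Omega,g) \in \FF_{(\Sigma,\gamma)}$ satisfies $R(g) \ge 0 \ge -6\kappa^2$ and has mean-convex boundary — exactly the hypotheses needed for the hyperbolic comparison.

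Next I would invoke the hyperbolic version of the Shi--Tam theorem (an extension of \cite{ShiTam02}, closely tied to the Wang--Yau quasi-local mass): attaching to $\p\Omega$ an asymptotically hyperbolic exterior extension that realizes the boundary data $(\gamma, H_g)$ and applying the positive mass theorem for asymptotically hyperbolic $3$-manifolds yields
\[
\int_{\p\Omega} H_g\, V\, d\sigma \le \int_{\Sigma_0} H_0\, V\, d\sigma \text{,}
\]
where $V > 0$ is the restriction to the surface of a fixed static potential on $\H^3(-\kappa^2)$. Since $\Sigma_0$ is compact, $0 < \min_{\Sigma_0} V \le V \le \max_{\Sigma_0} V < \infty$, and since $H_g > 0$ on $\p\Omega$,
\[
\int_{\p\Omega} H_g\, d\sigma \le \frac{1}{\min_{\Sigma_0} V}\int_{\p\Omega} H_g\, V\, d\sigma \le \frac{\max_{\Sigma_0} V}{\min_{\Sigma_0} V}\int_{\Sigma_0} H_0\, d\sigma =: C(\Sigma,\gamma) \text{,}
\]
a finite constant depending only on $(\Sigma,\gamma)$ through $\kappa$, the embedding, and $V$, all fixed once and for all. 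Taking the supremum over $\FF_{(\Sigma,\gamma)}$ gives $8\pi\, \LL_{(\Sigma,\gamma)} \le C(\Sigma,\gamma) < \infty$.

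The substantive input is the displayed hyperbolic monotonicity, equivalently the nonnegativity of the Brown--York-type quantity $\int_{\Sigma_0}(H_0 - H_g)\, V\, d\sigma$, where Shi--Tam's exterior ODE construction and the asymptotically hyperbolic positive mass theorem (or, alternatively, the Wang--Yau framework) do the real work; I expect the main point requiring care to be verifying that its hypotheses are met here — in particular that $R(g) \ge 0$ suffices and that the exterior extension over $(\Sigma_0,\gamma,H_g)$ exists for arbitrary mean-convex boundary data. The remaining ingredients are routine: Pogorelov's hyperbolic embedding theorem is classical, and pinching $V$ between positive constants is automatic once the compact reference surface $\Sigma_0$ has been fixed.
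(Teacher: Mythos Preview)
Your proposal is correct and follows essentially the same route as the paper: choose $\kappa$ with $K_\gamma > -\kappa^2$, embed $(\Sigma,\gamma)$ in $\H^3_{-\kappa^2}$ via Pogorelov, apply the hyperbolic Shi--Tam/Wang--Yau inequality (the paper's static potential is $V=\cosh\kappa r(p,\cdot)$, and it cites \cite{ShiTam06}, \cite{WangYau06}, \cite{C-H} for exactly the monotonicity you invoke), bound $V$ between positive constants on the fixed convex image, and then use additivity for $k\ge 2$. The one hypothesis you should add to your list of points ``requiring care'' is orientability: the hyperbolic positive mass input needs $\Omega$ to be spin, but Definition~\ref{df-F-multiple} does not assume $\Omega$ orientable, and the paper handles this by passing to the orientation double cover (where $\pi^{-1}(\Sigma)$ splits into two $2$-spheres).
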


This theorem is an important ingredient of our variational analog of the Brown-York mass. We prove it by making use of results of Wang-Yau \cite{WangYau06} and Shi-Tam \cite{ShiTam06}.

\begin{rem}
Let $\Sigma$ be a 2-sphere. It is easily seen that $\FF_{(\Sigma, \gamma)} \neq \emptyset$
 if the metric $\gamma$ has positive Gauss curvature. It is also the case that $\FF_{(\Sigma,\gamma)} \neq \emptyset$ for certain metrics $\gamma$ with arbitrarily negative portions (in the $L^1$ sense) of curvature; indeed, by employing the method in  \cite{M-S}, one can show that 
  $\FF_{(\Sigma, \gamma)} \neq \emptyset$ for every metric $\gamma$ on $\Sigma$ with $\lambda_1(-\Delta_\gamma + K_\gamma) > 0$, where $\Delta_\gamma$ and $K_\gamma$ are the Laplace-Beltrami operator and the Gauss curvature of $(\Sigma, \gamma)$, respectively.
\end{rem}

\begin{thm}[Rigidity]
\label{thm-main-rigidity}
Let $\Sigma_1, \ldots, \Sigma_k$ be $k \geq 1$ closed, connected, orientable surfaces endowed with Riemannian metrics $\gamma_1, \ldots, \gamma_k$. 
If there exists $(\Omega, g) \in \FF_{(\Sigma_1, \gamma_1), \ldots, (\Sigma_k, \gamma_k)}$ such that
\[ \frac{1}{8\pi} \int_{\Sigma_j} H_{g,j} d\sigma_j = \LL_{(\Sigma_j, \gamma_j)} \]
for some $j = 1, \ldots, k$, then $k = 1$ and $(\Omega, g)$ is isometric to a mean-convex handlebody with flat interior whose genus is that of $\Sigma_1$ (since $k = 1$). In particular, if $\operatorname{genus}(\Sigma_1) = 0$ then $(\Omega, g)$ is an Alexandrov embedded mean-convex ball in $\R^3$.
\end{thm}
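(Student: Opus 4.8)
The plan is to reduce to a single boundary component ($k=1$) and then to show, in two steps, that a single‑component maximizer is flat and that it is a handlebody. For the \emph{reduction to $k=1$}: suppose $k\ge 2$ and equality holds for $\Sigma_j$. Using the cut‑and‑fill construction behind Theorem \ref{thm-main-additivity}, I would detach the components $\Sigma_i$, $i\neq j$, from $(\Omega,g)$ by inserting collars with $R\ge 0$ that reverse the sign of the mean curvature and then capping off with \emph{arbitrary} fill‑ins of $(\Sigma_i,\gamma_i)$; this leaves the $\Sigma_j$‑geometry untouched and yields $(\hat\Omega,\hat g)\in\FF_{(\Sigma_j,\gamma_j)}$ with $\frac1{8\pi}\int_{\Sigma_j}H_{\hat g}\,d\sigma_j=\frac1{8\pi}\int_{\Sigma_j}H_{g,j}\,d\sigma_j=\LL_{(\Sigma_j,\gamma_j)}$, i.e.\ again a maximizer. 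By the $k=1$ case below, $\hat g$ must be flat; but one may equally cap off with fill‑ins that are \emph{not} flat, a contradiction, so $k=1$ and $(\Omega,g)=(\hat\Omega,\hat g)$.

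\emph{A single-component maximizer is scalar-flat.} Write $\Sigma=\Sigma_1$, $\gamma=\gamma_1$, suppose $\frac1{8\pi}\int_\Sigma H_g\,d\sigma=\LL_{(\Sigma,\gamma)}$, and assume for contradiction $R(g)\not\equiv 0$. Pick $\delta_0>0$ with $\{R(g)>\delta_0\}\neq\emptyset$ and a nonnegative $\chi\in C_c^\infty(\{R(g)>\delta_0\})$, $\chi\not\equiv 0$. Solve $\Delta_g v=\chi$ with $v|_\Sigma=0$ and set $\tilde g_\epsilon=(1+\epsilon v)^4 g$; then $\tilde g_\epsilon|_\Sigma=\gamma$ and
\be
R(\tilde g_\epsilon)=(1+\epsilon v)^{-5}\big(R(g)+\epsilon\,(R(g)\,v-8\chi)\big)\ge 0
\ee
for small $\epsilon$ (where $\chi=0$ the bracket is $R(g)(1+\epsilon v)\ge 0$; on $\operatorname{supp}\chi$, where $R(g)\ge\delta_0$, it is $\ge\delta_0-C\epsilon$). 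Since $H_{\tilde g_\epsilon}|_\Sigma=H_g+4\epsilon\,\partial_\nu v$ stays positive, $(\Omega,\tilde g_\epsilon)\in\FF_{(\Sigma,\gamma)}$, and by the divergence theorem $\int_\Sigma H_{\tilde g_\epsilon}\,d\sigma=\int_\Sigma H_g\,d\sigma+4\epsilon\int_\Omega\chi\,dV_g$, so $\frac1{8\pi}\int_\Sigma H_{\tilde g_\epsilon}\,d\sigma>\frac1{8\pi}\int_\Sigma H_g\,d\sigma$, contradicting maximality. Hence $R(g)\equiv 0$.

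\emph{A single-component maximizer is flat.} With $R(g)\equiv 0$, consider the linearized scalar curvature $DR_g$ acting on symmetric $2$-tensors $h$ with $h^\top|_\Sigma=0$; its formal adjoint is $\mathcal L_g^* V=\Hess_g V-(\Delta_g V)g-V\,\Ric_g$, and integration by parts shows the $L^2$-cokernel of $DR_g$ consists of $V$ with $\mathcal L_g^*V=0$ in $\Omega$ and $V|_\Sigma=0$. Taking the trace of $\mathcal L_g^*V=0$ and using $R(g)=0$ gives $\Delta_g V=0$, so such $V\equiv 0$; thus $DR_g$ is surjective, the scalar-flat metrics near $g$ with boundary metric $\gamma$ form a Banach submanifold, and every $h\in\ker DR_g$ with $h^\top|_\Sigma=0$ is the velocity of a path of fill-ins through $g$. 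Maximality (applied to $\pm h$) forces $\frac{d}{dt}\big|_0\int_\Sigma H_{\tilde g_t}\,d\sigma=0$ for all such $h$; a direct computation in Gaussian normal coordinates gives $\frac{d}{dt}\big|_0\int_\Sigma H_{\tilde g_t}\,d\sigma=-\tfrac12\int_\Sigma\gamma^{ij}\,\partial_r h_{ij}|_{r=0}\,d\sigma$. Since this linear functional of $h$ vanishes on $\ker DR_g$, it factors through the surjection $DR_g$, producing $V$ on $\Omega$ with $\mathcal L_g^*V=0$ and $V|_\Sigma$ a nonzero constant. Then $\Delta_g V=0$ and a harmonic function with constant boundary value is constant, so $\mathcal L_g^*V=-V\,\Ric_g=0$ with $V\neq 0$, whence $\Ric_g\equiv 0$ and $g$ is flat (as $\dim=3$).

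\emph{The topology, and the main obstacles.} Thus $(\Omega,g)$ is a flat, compact, connected $3$-manifold with connected, mean-convex boundary $\Sigma$. Mean-convexity makes $\Sigma$ a barrier for minimal surfaces, and a flat $3$-manifold contains no closed minimal $2$-sphere (it would lift to a closed minimal surface in $\R^3$), so $\Omega$ is irreducible; an analysis of the everywhere-focusing normal exponential map from $\Sigma$ shows $\pi_1(\Omega)$ is free, so $\Omega$ is a handlebody, necessarily of genus $\operatorname{genus}(\Sigma)$. When $\operatorname{genus}(\Sigma)=0$, one rules out nontrivial $\pi_1$ (a flat manifold cannot carry two or more disjoint mean-convex boundary $2$-spheres, applied on a cover), so $\Omega\cong B^3$ and the developing map, globally defined on the simply connected $\Omega$, is an immersion into $\R^3$ pulling back $\delta$ to $g$ — an Alexandrov-embedded mean-convex ball. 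I expect the hardest parts to be the sign-reversing collar with $R\ge 0$ used in the reduction and this final topological step; by contrast the scalar-flatness and flatness arguments above are essentially self-contained given Theorem \ref{thm-main-additivity}.
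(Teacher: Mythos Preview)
Your $k\ge 2$ reduction has a genuine gap. You propose to attach ``collars with $R\ge 0$ that reverse the sign of the mean curvature'' on the components $\Sigma_i$, $i\neq j$, and then cap off, producing a fill-in in $\FF_{(\Sigma_j,\gamma_j)}$ with the \emph{exact} same mean curvature on $\Sigma_j$. You correctly flag this as a hard step, but you do not construct such collars, and in general they need not exist: already for a warped product $dt^2+f(t)^2\gamma_i$, passing from $f'>0$ to $f'<0$ forces a maximum of $f$, where $R=\tfrac{2K_{\gamma_i}}{f^2}-\tfrac{4f''}{f}$ can be negative (e.g.\ whenever $K_{\gamma_i}<0$ at some point and $f''$ is small). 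Even if one could arrange something non-product, the construction would have to be exact (no $\eta$-loss) near $\Sigma_j$ for your contradiction-by-non-flat-cap to work, and you have not justified this. The paper sidesteps all of this: for $k\ge 2$ it uses Lemma~\ref{lem-cutting} to cut along an area-minimizing surface separating $S_j$ from the rest, then Lemma~\ref{lem-doubling} to double across the resulting minimal boundary. The doubling \emph{strictly increases} the mean curvature on $S_j$ (via the Hopf lemma for the conformal factor), and inequality~\eqref{ineq-single-sup} from Theorem~\ref{thm-main-additivity} then gives an immediate contradiction $\LL_{(\Sigma_j,\gamma_j)}<\LL_{(\Sigma_j,\gamma_j)}$. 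No sign-reversing collar, no appeal to the $k=1$ rigidity, and no exactness issue.

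Your $k=1$ flatness and scalar-flatness arguments are essentially correct and amount to re-deriving the content of \cite[Corollary~2.1]{MiaoShiTam09} that the paper simply cites; the paper's scalar-flat step uses the single conformal factor solving $\Delta_g u-\tfrac18 R(g)u=0$, $u|_{\partial\Omega}=1$, which is a bit more direct than your compactly supported $\chi$ construction. Your topological step, however, is weaker than the paper's. You only rule out minimal $2$-spheres (to get irreducibility) and then gesture at an ``everywhere-focusing normal exponential map'' to conclude $\pi_1$ is free; this is not an argument. The paper instead rules out \emph{all} closed embedded minimal surfaces by a second-variation-of-length computation on a shortest geodesic from such a surface to $\partial\Omega$ (using $\Ric_g=0$, $H_{\partial\Omega}>0$, $H_T=0$), and then invokes Meeks--Simon--Yau \cite{MeeksSimonYau} to conclude directly that $\Omega$ is a handlebody.
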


\begin{rem}
The equality above will hold true for all $j = 1, \ldots, k$ if
\[ \frac{1}{8\pi} \int_{\p \Omega} H_g d\sigma = \LL_{(\Sigma_1,\gamma_1),\ldots,(\Sigma_k,\gamma_k)} \text{,} \]   
i.e., if the fill-in $(\Omega,g)$ attains the supremum $\LL_{(\Sigma_1,\gamma_1),\ldots,(\Sigma_k,\gamma_k)}$. 
\end{rem}

Theorem \ref{thm-main-rigidity} confirms that a disconnected boundary cannot support a maximizing configuration, as is the case in Theorem \ref{thm-ShiTam} when the boundary consists of spheres with positive Gauss curvature.

\begin{rem}
It would be interesting to know whether Theorem \ref{thm-main-finiteness} continues to hold if we replace the boundary 2-sphere with 
a surface of higher genus. Note that Theorems \ref{thm-main-additivity} and \ref{thm-main-rigidity} do not make any genus assumptions.
\end{rem}

\subsection{Quasi-local mass}

The results above, together with the implication of Theorem \ref{thm-ShiTam} on Brown-York mass, suggest a variational analog of this quasi-local mass that does not require positivity of the Gauss curvature on the boundary. We describe this in the following Theorem-Definition.

\begin{thm}[Definition of $\m(\Sigma;\Omega)$] \label{df-BY-analog}
Given a compact, connected Riemannian 3-manifold $(\Omega, g)$ 
with nonnegative scalar curvature, and a mean-convex  boundary $ \Sigma$ which is a topological 2-sphere, define
\be
\m  (\Sigma; \Omega) = \L_{ (\Sigma, \gamma) } - \frac{1}{8\pi} \int_{\Sigma} H_g d \sigma .
\ee
Here $\gamma$, $d\sigma$ are the metric and the area element induced on $\Sigma$ by $g$,
and,  
\be \label{eq-def-L}
\L_{(\Sigma, \gamma)} := \sup \lf\{ \frac{1}{8 \pi}  \int_{\p M} H_h d \sigma \ | \ (M, h) \in \F \ri\} ,
\ee
where $\F = \F_{(\Sigma,\gamma)}$ is as in Definition \ref{df-F} below. Then $\m(\Sigma;\Omega)$ is
\begin{enumerate}
\item[(a)] well-defined and finite,
\item[(b)] nonnegative, i.e.,  $\m (\Sigma; \Omega) \ge 0$, and
\item[(c)] $\m  (\Sigma; \Omega) = 0$ only if $ (\Omega, g)$ is a flat 3-ball immersed in $\R^3$.
\end{enumerate}
Finally, when $\Sigma$ has positive Gauss curvature, $\m  (\Sigma; \Omega) = \mby (\Sigma; \Omega)$.
Hence, $\m  (\Sigma; \Omega)$ may be viewed as a variational analog 
and generalization of Brown-York mass. 
\end{thm}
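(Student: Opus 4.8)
The plan is to deduce the four assertions of Theorem \ref{df-BY-analog} from the results already proved, after recording the key structural fact that $(\Omega,g)$ is itself a legitimate extension of its own boundary data. Writing $\gamma$ for the metric induced on $\Sigma=\p\Omega$, one checks directly that $(\Omega,g)\in\F_{(\Sigma,\gamma)}$: it is a compact, connected, nonnegatively curved manifold whose (mean-convex) boundary is isometric to $(\Sigma,\gamma)$. Hence $\F_{(\Sigma,\gamma)}\neq\emptyset$, so $\L_{(\Sigma,\gamma)}>-\infty$, and using $(\Omega,g)$ as a competitor in \eqref{eq-def-L} gives at once $\tfrac1{8\pi}\int_\Sigma H_g\,d\sigma\le\L_{(\Sigma,\gamma)}$, which is assertion (b).

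It remains, for (a), to prove $\L_{(\Sigma,\gamma)}<\infty$; granting this, $\m(\Sigma;\Omega)$ is a finite real number (the boundary integral being finite as $\Sigma$ is compact and $g$ smooth) and is unambiguous, since $\L_{(\Sigma,\gamma)}$ depends only on the isometry type of $(\Sigma,\gamma)$ and $\int_\Sigma H_g\,d\sigma$ only on $(\Omega,g)$. The finiteness of $\L_{(\Sigma,\gamma)}$ is essentially Theorem \ref{thm-main-finiteness}: one has $\L_{(\Sigma,\gamma)}\ge\LL_{(\Sigma,\gamma)}$ trivially, while for the reverse inequality one takes $(M,h)\in\F_{(\Sigma,\gamma)}$, doubles it across its minimal (horizon) boundary components, smooths the resulting metric with a corner along a minimal hypersurface to produce a genuine element of $\FF_{(\Sigma,\gamma),(\Sigma,\gamma)}$, and applies the additivity Theorem \ref{thm-main-additivity} to bound $\tfrac1{8\pi}\int_\Sigma H_h\,d\sigma$ by $\LL_{(\Sigma,\gamma)}$. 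Thus $\L_{(\Sigma,\gamma)}=\LL_{(\Sigma,\gamma)}$, which is finite by Theorem \ref{thm-main-finiteness} (resting on Wang--Yau \cite{WangYau06} and Shi--Tam \cite{ShiTam06}).

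For (c): if $\m(\Sigma;\Omega)=0$ then $\tfrac1{8\pi}\int_\Sigma H_g\,d\sigma=\L_{(\Sigma,\gamma)}=\LL_{(\Sigma,\gamma)}$, so the fill-in $(\Omega,g)\in\FF_{(\Sigma,\gamma)}$ attains $\LL_{(\Sigma,\gamma)}$; the rigidity Theorem \ref{thm-main-rigidity} then forces $(\Omega,g)$ to be a mean-convex handlebody with flat interior of genus $\operatorname{genus}(\Sigma)=0$, i.e.\ an Alexandrov-embedded mean-convex ball in $\R^3$ --- a flat $3$-ball immersed in $\R^3$, as claimed.

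Finally, when $\Sigma$ has positive Gauss curvature: by the solution of Weyl's embedding problem $(\Sigma,\gamma)$ bounds a convex domain $(B^3,\delta)\subset\R^3$, whose boundary mean curvature is the $H_0$ of \eqref{eq-BY}; since $(B^3,\delta)\in\F_{(\Sigma,\gamma)}$, $\L_{(\Sigma,\gamma)}\ge\tfrac1{8\pi}\int_\Sigma H_0\,d\sigma$. Conversely each $(M,h)\in\F_{(\Sigma,\gamma)}$ has mean-convex boundary with $(\Sigma,\gamma)$ of positive Gauss curvature, so Theorem \ref{thm-ShiTam} (applied after doubling across any horizon boundary, or in its minimal-inner-boundary form) gives $\int_\Sigma H_h\,d\sigma\le\int_\Sigma H_0\,d\sigma$, whence $\L_{(\Sigma,\gamma)}\le\tfrac1{8\pi}\int_\Sigma H_0\,d\sigma$. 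Therefore $\L_{(\Sigma,\gamma)}=\tfrac1{8\pi}\int_\Sigma H_0\,d\sigma$ and $\m(\Sigma;\Omega)=\tfrac1{8\pi}\int_\Sigma(H_0-H_g)\,d\sigma=\mby(\Sigma;\Omega)$. The main obstacle throughout is the finiteness in (a): parts (b), (c) and the Brown--York identification are essentially one-line consequences of $(\Omega,g)\in\F_{(\Sigma,\gamma)}$ together with Theorems \ref{thm-main-rigidity} and \ref{thm-ShiTam} and the flat competitor, whereas the finiteness of $\L_{(\Sigma,\gamma)}$ requires the quasi-local-mass input behind Theorem \ref{thm-main-finiteness}, plus the doubling-and-smoothing reduction from $\F_{(\Sigma,\gamma)}$ to ordinary fill-ins.
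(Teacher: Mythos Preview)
Your proof is correct and follows essentially the same approach as the paper: nonnegativity from $(\Omega,g)$ being a competitor, finiteness from the Shi--Tam/Wang--Yau hyperbolic input, rigidity from the maximizer result, and the Brown--York identification via Shi--Tam plus the equality $\L_{(\Sigma,\gamma)}=\LL_{(\Sigma,\gamma)}$. The only notable difference is organizational: the paper invokes the $\F$-versions (Propositions \ref{thm-main-finiteness-single} and \ref{thm-main-rigidity-single}) directly and records $\L=\LL$ separately, whereas you first reduce to $\LL$ and then quote Theorems \ref{thm-main-finiteness} and \ref{thm-main-rigidity}. One small slip: after doubling via Lemma \ref{lem-doubling}, the mirror boundary $\So'$ carries a (possibly different) metric $\gamma'$, so the doubled manifold lies in $\FF_{(\Sigma,\gamma),(\Sigma,\gamma')}$ rather than $\FF_{(\Sigma,\gamma),(\Sigma,\gamma)}$; this is harmless since inequality \eqref{ineq-single-sup} of Theorem \ref{thm-main-additivity} applies regardless.
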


\begin{df}[Fill-ins, II]
\label{df-F}
Let $\Sigma$ be a closed, connected, orientable surface endowed with a Riemannian metric $\gamma$.
Denote by $\F_{(\Sigma, \gamma)}$ the set of all compact, connected
 Riemannian 3-manifolds $(\Omega, g)$ with boundary such that:
\begin{enumerate}
\item $\partial \Omega$ has a connected  component $\So$ which, with the induced metric, is isometric to $(\Sigma, \gamma)$,
\item $\So$ is mean-convex; i.e., the mean curvature vector of $\So$ points inward, 
\item $\p \Omega \setminus \So$, if nonempty, is a minimal surface, possibly disconnected, and
\item $R(g) \geq 0$, where $R(g)$ is the scalar curvature of $g$.
\end{enumerate}
\end{df}

\begin{rem} 
Clearly $\FF_{(\Sigma,\gamma)} \subseteq \F_{(\Sigma,\gamma)}$, and therefore $\LL_{(\Sigma,\gamma)} \leq \L_{(\Sigma,\gamma)}$. We will, in fact, prove that these last two quantities coincide. Thus, $\m(\Sigma; \Omega)$ will remain unchanged if one replaces $\F$ with $\FF$. We nevertheless choose to use this  enlarged class in the definition of $\m(\Sigma; \Omega)$, because it is more suitable for discussion on quasi-local mass and more convenient for the cut-and-fill operations:

\begin{enumerate}
\item 
For an  element $(\Omega, g) \in \F_{(\Sigma,\gamma)}$, the portion $\p \Omega \setminus \So$ of the boundary, if nonempty, 
represents  horizons of black holes detected by observers at (the outer boundary) $\So$.
For example, the region bounded by a rotationally symmetric sphere $(\Sigma, \gamma)$ of positive mean curvature and the horizon in a half spatial Schwarzschild manifold of positive mass is now a valid fill-in of $(\Sigma, \gamma)$, while it wasn't in the original class $\FF_{(\Sigma,\gamma)}$.
\item 
Elements in $\F_{(\Sigma,\gamma)}$ serve as building blocks, effectively allowing to deduce  $\FF_{(\Sigma_1,\gamma_1),\ldots,(\Sigma_k,\gamma_k)}$-related  results from them via a cut-and-fill technique; we cut composite manifolds across minimal surfaces, and, when necessary, fill in holes with 3-balls of positive scalar curvature.
\end{enumerate}
\end{rem}

\begin{rem}
The motivation behind Theorem \ref{df-BY-analog} is that only the integral quantity 
$ \int_{\partial \Omega} H_0 d\sigma$
is of actual interest for the purposes of the actual definition of Brown-York mass in \eqref{eq-BY}, 
and not the pointwise quantity $H_0$. Moreover, when $\Sigma$ has positive Gauss curvature, Theorem \ref{thm-ShiTam} characterizes this integral quantity as 
$$ \int_{\Sigma} H_0 d\sigma = \sup_{(M, h) \in \FF} \int_{\partial M} H_h d\sigma ,$$ 
over an appropriate class $\FF$ of fill-ins.
\end{rem}

\begin{rem}
It seems a challenging question to check whether mean-convex domains $\Omega \subset \R^3$, with $\Sigma = \p \Omega$ a 2-sphere, necessarily maximize the total mean curvature on their boundary relative to all competitors in $\FF_{(\Sigma,g_{\R^3}|_{\Sigma})}$; i.e., is $\m (\Sigma; \Omega) = 0$ for all mean-convex domains $\Omega \subset \R^3$ which are bounded by a 2-sphere $\Sigma$? If $ \Sigma$ is strictly convex in $\R^3$, 
then, indeed, $\m(\Sigma; \Omega) = \mby(\Sigma; \Omega) = 0$ by Theorem \ref{thm-ShiTam}.
\end{rem}

\subsection*{Organization of the paper}

In Section \ref{sec-technical-lemmas} we establish the basic cutting, filling, and doubling lemmas that will be used throughout the paper. In Section \ref{sec-single-component} we establish, for $\F$ and $\L$, finiteness and rigidity results in the spirit of Theorems \ref{thm-main-finiteness} (finiteness) and \ref{thm-main-rigidity} (rigidity). In Section \ref{sec-multiple-components} we first prove Theorem \ref{thm-main-additivity} (additivity) for $\FF$, $\LL$, and use it to derive Theorems \ref{thm-main-finiteness} and \ref{thm-main-rigidity} for $\FF$ and $\LL$ from the corresponding  results for $\F$ and $\L$. In Section \ref{sec-BY-mass} we prove
Theorem \ref{df-BY-analog} pertaining to $\m(\Sigma; \Omega)$.

\begin{rem}
After the first version of this paper was completed, 
we learned that the assertion  $\LL_{(\Sigma, \gamma)} < \infty$  in Theorem \ref{thm-main-finiteness}, and its proof, appear independently in a recent work by 
Lu \cite{Lu-isometric-embedding}, which establishes a priori estimates for certain isometric embeddings of 2-spheres into general Riemannian 3-manifolds.
\end{rem}

\subsection*{Acknowledgments} The first named author would like to thank Rick Schoen for his continued guidance and Otis Chodosh for a helpful discussion in the early stages of this work. Both authors would like to thank Pengfei Guan  for bringing the work in \cite{Lu-isometric-embedding} to our attention, as well as the referees for their helpful comments and their careful reading of the manuscript.

\section{Technical lemmas} \label{sec-technical-lemmas}

In this section we prove some technical lemmas that will be invoked multiple times throughout the paper.

\begin{lemma}[Cutting] \label{lem-cutting}
    Let $\Sigma_1, \ldots, \Sigma_k$ be $k \geq 2$ closed, connected, orientable surfaces endowed with Riemannian metrics $\gamma_1, \ldots, \gamma_k$. If $(\Omega, g) \in \FF_{(\Sigma_1,\gamma_1),\ldots,(\Sigma_k,\gamma_k)}$, then for every $j = 1,\ldots,k$ there exists an
    $(\Omega_j, g_j) \in \F_{(\Sigma_j,\gamma_j)}$ such that
    \begin{enumerate}
        \item the mean curvature of $\Sigma_j$ is the same in $(\Omega, g)$ and $(\Omega_j, g_j)$,
        \item $R(g_j) > 0$ on $\Omega_j$ if $R(g) > 0$ on $\Omega$, and
        \item $\p \Omega_j \setminus \Sigma_j$ consists of  stable, orientable minimal surfaces.
    \end{enumerate}
\end{lemma}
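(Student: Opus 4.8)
The plan is to take the composite fill-in $(\Omega, g)$ and surgically isolate the boundary component $\Sigma_j$ by cutting $\Omega$ along a suitable minimal surface that separates $\Sigma_j$ from the remaining boundary components. First I would consider the homology class (or the relative homology class in $H_2(\Omega, \p\Omega \setminus \Sigma_j)$) determined by $\Sigma_j$ inside $\Omega$, and minimize area among all surfaces in that class that are disjoint from $\Sigma_j$ (equivalently, that push $\Sigma_j$ inward). Standard geometric measure theory — existence and regularity of area-minimizers in a 3-manifold with mean-convex boundary, together with the maximum principle preventing the minimizer from touching the mean-convex boundary $\p\Omega$ — yields a smooth, embedded, stable, orientable minimal surface $S_j$ (possibly disconnected, possibly empty if $\Sigma_j$ already bounds) that separates $\Sigma_j$ from $\p\Omega \setminus \Sigma_j$. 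One has to be slightly careful: one wants a minimizer that is homologous to $\Sigma_j$ \emph{rel} the interior, so that cutting along $S_j$ produces a region $\Omega_j$ whose boundary is exactly $\Sigma_j \cup S_j$; the mean-convexity of $\p\Omega$ is what prevents the minimizing sequence from escaping to the boundary.

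Once $S_j$ is produced, I would let $\Omega_j$ be the component of $\Omega \setminus S_j$ containing $\Sigma_j$, with $g_j := g|_{\Omega_j}$. Then: the metric on $\Sigma_j$ and its mean curvature are unchanged since we did nothing near $\Sigma_j$, giving (1); $R(g_j) = R(g) > 0$ pointwise if $R(g) > 0$, giving (2); $S_j = \p\Omega_j \setminus \Sigma_j$ consists of stable, orientable minimal surfaces by construction, giving (3); and $\Sigma_j$ remains mean-convex, $S_j$ is minimal, and $R(g_j) \geq 0$, so $(\Omega_j, g_j) \in \F_{(\Sigma_j, \gamma_j)}$ as required by Definition \ref{df-F}. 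One subtlety is connectedness: $\F_{(\Sigma_j,\gamma_j)}$ requires $\Omega_j$ connected, but the component of $\Omega \setminus S_j$ containing $\Sigma_j$ is connected by definition, so this is automatic; if $S_j$ has several components only some of which bound that region, one simply keeps the relevant ones. Another subtlety is orientability of $S_j$: in an orientable $\Omega$ a separating embedded surface is automatically two-sided, hence orientable.

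The main obstacle I anticipate is the correct choice of variational problem to guarantee that the minimizer genuinely \emph{separates} $\Sigma_j$ from the other components and does not, say, collapse onto a single other boundary component or develop unwanted topology — in other words, setting up the right homology/homotopy constraint and invoking the appropriate compactness-and-regularity package (Federer–Fleming plus Schoen–Simon–Yau interior regularity, and the strong maximum principle at the mean-convex boundary due to the fact that $\p\Omega$ has $H_g > 0$ pointing inward). I would handle the degenerate case where the minimizer is empty (i.e., $\Sigma_j$ is already null-homologous rel the rest) by simply taking $\Omega_j = \Omega$ with $\p\Omega\setminus\Sigma_j$ being minimal — but in the present setting $\p\Omega \setminus \Sigma_j$ need not be minimal, so in fact the minimizer is nonempty precisely when there are other boundary components; I would note that even then the argument goes through and produces a nonempty $S_j$. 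Finally, I would record that the same construction, applied with $R(g)\geq 0$ only, still lands in $\F_{(\Sigma_j,\gamma_j)}$, which is all that is needed for the additivity proof later; the strict positivity in (2) is a bonus used elsewhere.
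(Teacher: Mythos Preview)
Your approach is essentially the paper's: minimize area among integral currents in the class $[\Sigma_j]\in H_2(\Omega;\Z)$, use mean-convexity of $\partial\Omega$ to keep the minimizer in the interior, and take $\Omega_j$ to be (the metric completion of) the component of $\Omega\setminus S$ containing $\Sigma_j$. Your worries about ``separation'' and the ``correct variational problem'' are largely unnecessary: once $S$ is an integral-current minimizer homologous to $\Sigma_j$, the relation $\Sigma_j - S = \partial U$ for some $3$-chain $U$ automatically forces $\Sigma_j$ to lie on one side, and $S$ is nonempty because $[\Sigma_j]\neq 0$ when $k\ge 2$.

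There is, however, one genuine omission. Definition~\ref{df-F-multiple} does \emph{not} assume $\Omega$ is orientable (only the boundary surfaces are), so your remark that ``in an orientable $\Omega$ a separating embedded surface is automatically two-sided'' leaves the nonorientable case unaddressed; in particular, conclusion (3) about orientability of $\partial\Omega_j\setminus\Sigma_j$ is not justified. The paper handles this by passing to the orientation double cover $\pi:\tilde\Omega\to\Omega$, noting that $\pi^{-1}(\Sigma_j)$ splits into two copies of $\Sigma_j$ since $\Sigma_j$ is orientable, and running the argument upstairs. You should add this step.
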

\begin{proof}
For convenience, first suppose $ \Omega$ is orientable.
Write $S_j$ for the boundary component of $\Omega$ corresponding to our fixed $\Sigma_j$. Since the mean curvature vector points inward on $\p \Omega$, standard results in geometric measure theory show that there exists a smooth, oriented minimal surface $S$ in the interior of $\Omega$ that minimizes area in the homology class $[S_j] \in H_2(\Omega; \Z)$. (Specifically, we minimize area in the class of integral currents homologous to $S_j$.) Denote $\Omega_j$ the metric completion of the component of $\Omega \setminus S$ containing $S_j$, and $g_j$ its induced metric. Note that $\Omega_j$ is orientable. Then $(\Omega_j, g_j)  \in \F_{(\Sigma_j,\gamma_j)}$ and satisfies all three assertions.

If $ \Omega$ is nonorientable, consider its orientation double cover $\pi:  \tilde \Omega \rightarrow \Omega$. 
Using the fact that $ S_j$ itself  is orientable, one can show that  $ \pi^{-1} (S_j)$ is  a  disjoint union of  two  copies  of $ S_j$. 
Denote one of them by $ \tilde{S}_{j}$. 
The lemma follows by repeating  the previous proof with $(\Omega, g)$ and $S_j$ replaced by $(\tilde \Omega, \pi^*(g))$ and  $\tilde{S}_{j}$, respectively.
\end{proof}

\begin{lemma}[Filling] \label{lem-filling}
    Let $\Sigma$ be a closed, connected, orientable surface  with a Riemannian metric $\gamma$. Suppose $(\Omega, g) \in \F_{(\Sigma,\gamma)}$ is such that:
    \begin{enumerate}
        \item $\Sigma_H = \p \Omega \setminus \So$ is nonempty,
        \item $R(g) > 0$ on $\Sigma_H \subset \p \Omega$, and
        \item every component of $\Sigma_H$ is a stable minimal 2-sphere.
    \end{enumerate}
    Then for every $\eta > 0$, there exists $(D, h) \in \FF_{(\Sigma,\gamma)}$ with $H_h > H_g - \eta$ on its boundary $S_O$, which corresponds to $\So$.
\end{lemma}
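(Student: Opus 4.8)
The plan is to cap off each component of $\Sigma_H$ by a scalar-nonnegative $3$-ball with minimal boundary, glue these caps onto $\Omega$, and then smooth the resulting metric corners at the cost of only an arbitrarily small decrease of the mean curvature on $\So$. The starting observation is that each component $\Sigma_H^i$ of $\Sigma_H$, with induced metric $\gamma_i$, satisfies $\lambda_1(-\Delta_{\gamma_i}+K_{\gamma_i}) > 0$. Indeed, with $\nu$ the unit normal into $\Omega$ and $A_i$ the second fundamental form of $\Sigma_H^i$, the Gauss equation together with $H_g \equiv 0$ on $\Sigma_H^i$ gives $\Ric_g(\nu,\nu) = \tfrac12 R(g) - K_{\gamma_i} - \tfrac12|A_i|^2$ there; inserting this into the stability inequality $\int_{\Sigma_H^i}|\nabla\phi|^2 \geq \int_{\Sigma_H^i}(|A_i|^2 + \Ric_g(\nu,\nu))\phi^2$ and rearranging yields
\[ \int_{\Sigma_H^i}\big(|\nabla\phi|^2 + K_{\gamma_i}\phi^2\big) \;\geq\; \tfrac12\int_{\Sigma_H^i}\big(R(g) + |A_i|^2\big)\phi^2 \, , \]
whose right side is strictly positive for $\phi \not\equiv 0$ because $R(g) > 0$ on $\Sigma_H$.

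Given this, I would invoke the Mantoulidis--Schoen construction (the method of \cite{M-S}; cf.\ the remark after Theorem \ref{thm-main-finiteness}) to produce, for each $i$, a compact Riemannian $3$-ball $(N_i, k_i)$ with $R(k_i) \geq 0$ whose boundary is isometric to $(\Sigma_H^i, \gamma_i)$ and is a minimal $2$-sphere in $N_i$: one interpolates, via a scalar-nonnegative collar $[0,1]\times S^2$, from $(\Sigma_H^i,\gamma_i)$ at a minimal inner end to a round sphere at the outer end, and caps the round end off with a round ball, smoothing the (strictly favorable) interior corner. Gluing each $N_i$ onto $\Omega$ along $\Sigma_H^i$ via an isometry yields a connected $3$-manifold $D$ whose only boundary component is $\So$, still isometric to $(\Sigma,\gamma)$ and mean-convex; its metric is smooth off $\Sigma_H = \bigcup_i \Sigma_H^i$ and Lipschitz across $\Sigma_H$, and since $\Sigma_H^i$ is minimal from both sides, the corner condition of Miao and Shi--Tam is satisfied, so $D$ carries nonnegative scalar curvature in the distributional sense. (Non-orientability of $\Omega$ is immaterial, as the $N_i$ and the $\Sigma_H^i$ are orientable.)

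Finally I would smooth the corners following Miao: mollify the metric in a small neighborhood of $\Sigma_H$ to obtain smooth metrics $g_\delta$, unchanged away from that neighborhood, whose scalar curvature satisfies $\|R^-(g_\delta)\|_{L^2(D)} \to 0$ as $\delta \to 0$ — here it is essential that $\Sigma_H$ is minimal from both sides, so that no concentrated negative curvature is created. One then conformally corrects: $h_\delta := u_\delta^4 g_\delta$, where $u_\delta > 0$ solves $-8\Delta_{g_\delta}u_\delta = R^-(g_\delta)\, u_\delta$ on $D$ with $u_\delta = 1$ on $\So$; this is solvable with $u_\delta \to 1$ uniformly because $\|R^-(g_\delta)\|_{L^2}$ is small, and then $R(h_\delta) = u_\delta^{-4} R^+(g_\delta) \geq 0$. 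Since $\So$ is disjoint from the smoothing region, $u_\delta \to 1$ smoothly near $\So$, so $h_\delta|_{\So} = \gamma$ while $H_{h_\delta}|_{\So} = H_g|_{\So} + 4\,\partial_\nu u_\delta|_{\So} \to H_g|_{\So}$. Choosing $\delta$ small so that $\|H_{h_\delta} - H_g\|_{C^0(\So)} < \min\{\eta,\ \min_{\So} H_g\}$ gives $(D, h_\delta) \in \FF_{(\Sigma,\gamma)}$ with $H_{h_\delta} > H_g - \eta$ on $\So$.

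I expect the last step to be the main obstacle: turning the Lipschitz, distributionally scalar-nonnegative metric on $D$ into a genuinely smooth one with $R \geq 0$ while sacrificing only an arbitrarily small amount of mean curvature on $\So$. The two points requiring care are that the mollification generates no concentrated negative scalar curvature (which is why the minimality of $\Sigma_H$ from both sides matters) and that the conformal correction $u_\delta \to 1$ with $\partial_\nu u_\delta|_{\So} \to 0$; the latter is precisely the origin of the $-\eta$ in the statement. A secondary point to verify is that the Mantoulidis--Schoen cap can be taken with boundary \emph{exactly} minimal, so that the corner condition in the gluing step holds as stated.
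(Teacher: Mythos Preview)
Your proposal is correct and follows essentially the same approach as the paper: establish $\lambda_1(-\Delta + K) > 0$ on each horizon sphere (the paper simply cites \cite{F-S, M-S} for this, whereas you spell out the stability computation), use the Mantoulidis--Schoen construction to produce positive-scalar-curvature $3$-ball caps, glue them onto $\Omega$ along $\Sigma_H$, and then invoke Miao's smoothing \cite[Proposition 3.1]{Miao02} followed by a small conformal deformation to restore smooth $R \geq 0$ at the cost of an arbitrarily small loss in $H$ on $\So$. Your secondary worry is unnecessary: since $\Sigma_H$ is already minimal from the $\Omega$ side, Miao's corner condition only requires the cap boundary to be mean-convex (not exactly minimal), which the M--S collar construction readily provides.
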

\begin{proof}
The first eigenvalue of the operator 
$$ -\Delta_{T_\ell} + K_\ell $$
is strictly positive on each sphere $T_\ell$ on the portion $\Sigma_H$ of $\p \Omega$ (cf. \cite{F-S, M-S}). 
Here $\Delta_{T_\ell}$, $K_\ell$ are the Laplacian, and Gauss curvature of $T_\ell$ with respect to the induced metric from $g$. 
By the method in \cite{M-S},  we can glue 3-balls of positive scalar curvature onto  $T_\ell$ to obtain a compact manifold $D$ with $\partial D = S_O$.
More precisely, for every $\ell$ one can apply  Lemma 1.3  in \cite{M-S} to first produce a cylinder of positive scalar curvature, 
and then attach a spherical cap to it. 
If the resulting metric $\hat{g}$ on $D$ were smooth across every $T_\ell$, then $(D, \hat{g}) \in \FF_{(\Sigma, \gamma)}$. In general $\hat{g}$ will not be smooth across the $T_\ell$, so we apply  \cite[Proposition 3.1]{Miao02} to $(D, \hat{g})$ followed by a small conformal deformation to obtain
another metric $h$ on $D$ such that $(D, h) \in \FF_{(\Sigma, \gamma)}$ and $H_{S_O,h} \geq H_{S_O,\hat{g}} - \eta/2 \geq H_{\Sigma_O,g} - \eta$.
\end{proof}

\begin{lemma}[Doubling] \label{lem-doubling}
Let $\Sigma$ be a closed, connected, orientable surface with a Riemannian metric $\gamma$. 
Suppose $(\Omega, g) \in \F_{(\Sigma,\gamma)}$, and that $\Sigma_H = \p \Omega \setminus \So$ is nonempty.  Let $D$ denote the doubling of $\Omega$ across $\Sigma_H$, so that $\p D = \So \cup \So'$, where $\So'$ denotes the mirror image of $\So$. For every $\eta > 0$ there exists a scalar-flat Riemannian metric $h$ on $D$ such that $(D, h)$ satisfies:
\begin{enumerate} 
\item $\So$ with the induced metric from $h$ is isometric to $(\Sigma,\gamma)$,
\item $H_h > H_g$ on $\So$, and
\item $H_h' > H_g - \eta$ on $\So'$,
\end{enumerate}
where $H_g$ denotes the mean curvature of $\So$ in $(\Omega, g)$, and $H_h$, $H_h'$ denote the mean curvatures of $\So$, $\So'$ in $(D, h)$.
\end{lemma}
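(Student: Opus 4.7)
My plan is to construct $h$ in three stages: doubling, smoothing, and a conformal flattening via a mixed boundary value problem.

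First, I will form the natural doubled metric $g_0$ on $D$ by reflecting $g$ across $\Sigma_H$. This $g_0$ is Lipschitz, smooth on each side, and since $\Sigma_H$ is minimal in $(\Omega,g)$ its mean curvature (zero) is the same from both sides, so $g_0$ carries no singular distributional scalar curvature along $\Sigma_H$.

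Second, I will apply Miao's mollification \cite[Proposition 3.1]{Miao02} in a $\delta$-tubular neighborhood of $\Sigma_H$, yielding a smooth $g_\delta$ on $D$ with $g_\delta = g_0$ outside that neighborhood (so $g_\delta = g_0$ near $\p D$), $g_\delta \to g_0$ in $C^0(D)$, and the negative part of $R(g_\delta)$ arbitrarily small in $L^p$ for any $p > 3/2$. A subsequent $C^0$-small conformal correction, standard in this area (cf.\ \cite[\S 4]{Miao02}), will absorb that negative part to produce $R(g_\delta) \ge 0$ on all of $D$, while preserving $g_\delta$ near $\p D$. A further tiny conformal bump supported away from $\p D$ also ensures $R(g_\delta) \not\equiv 0$, which will be used below to get strict inequalities.

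Third, I will solve the mixed Dirichlet-Neumann problem
\begin{equation*}
-8\Delta_{g_\delta} u + R(g_\delta)\,u = 0 \text{ in } D, \quad u|_{\So} = 1, \quad \p_\nu u|_{\So'} = 0,
\end{equation*}
with $\nu$ the outward unit normal. Coercivity (from the Poincar\'e inequality on $\{v \in H^1(D) : v|_{\So} = 0\}$ together with $R(g_\delta) \ge 0$) produces a unique positive solution $u$ by Lax-Milgram. The weak maximum principle gives $u \le 1$, and since $R(g_\delta) \not\equiv 0$ prevents $u \equiv 1$, the strong maximum principle upgrades this to $u < 1$ throughout $D \setminus \So$; in particular $u < 1$ on $\So'$, for otherwise the Hopf boundary lemma at $\So'$ would contradict the Neumann condition. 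The Hopf lemma at the boundary maximum locus $\So$ then gives $\p_\nu u > 0$ strictly on $\So$. Setting $h := u^4 g_\delta$, the standard conformal transformation formulas yield $R(h) \equiv 0$, induced metric $u^4 g_\delta|_{\So} = \gamma$ on $\So$ (verifying (1)), and
\begin{equation*}
H_h|_{\So} = H_g + 4\p_\nu u > H_g, \qquad H_h'|_{\So'} = u^{-2} H_g > H_g > H_g - \eta,
\end{equation*}
verifying (2) and (3) for any $\eta > 0$ (with slack to spare on $\So'$).

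The principal technical obstacle is the smoothing step, where one must simultaneously produce a smooth metric, preserve the boundary data, and recover nonnegative scalar curvature. This is enabled by the matching (zero) mean curvatures from the two sides of $\Sigma_H$, combined with Miao's mollification and a small conformal correction --- both standard tools in the scalar curvature literature by now.
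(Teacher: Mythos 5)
Your overall plan is a genuine alternative to the paper's: you reflect first, smooth the resulting Lipschitz metric via Miao's procedure, and then run a single mixed Dirichlet--Neumann conformal problem on the doubled manifold $D$. The paper instead works entirely on $\Omega$ before gluing: it solves essentially your mixed BVP on $\Omega$ (Dirichlet $u = 1$ at $\So$, Neumann at $\Sh$) to flatten $R$ and increase $H$ at $\So$, then introduces two harmonic conformal factors $\phi_1$ and $\phi_2 = (2 - \varepsilon) - \phi_1$, one for each copy of $\Omega$, arranged so that the induced metrics and mean curvatures across $\Sh$ agree after gluing, and only at the very end applies \cite[Proposition 3.1]{Miao02} plus a small boundary-fixing conformal deformation. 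The $\phi_1, \phi_2$ trick is what produces a fixed, $\varepsilon$-sized gain $H_{g_1} - H_{\tilde{g}} = 4\p_\nu\phi_1 > 0$ at $\So$ \emph{before} any smoothing error is introduced, and that gap is what absorbs the subsequent perturbations.

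The gap in your proposal is in the second stage, and it matters precisely because your third stage does not create any slack. You assert that the small conformal correction after Miao's mollification can absorb the negative $L^p$ part of $R(g_\delta)$ ``while preserving $g_\delta$ near $\p D$.'' This is not what the standard correction does. The conformal factor $v$ solving the appropriate Dirichlet problem with $v|_{\p D} = 1$ preserves the induced boundary metric, but it is not identically $1$ near $\p D$, and it changes the boundary mean curvature by $4\p_\nu v$, a quantity which is small (controlled by $\|R_-(g_\delta)\|_{L^p}$) but of indeterminate sign. There is no conformal factor that is literally $\equiv 1$ near $\p D$, has the right sign of $-8\Delta v + R(g_\delta) v$ everywhere, and is bounded, unless $R(g)$ is already strictly positive in a neighborhood that you can lean on --- which fails when $R(g) \equiv 0$. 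After this correction your smoothed metric has $H_{\text{corrected}}|_{\So} = H_g + 4\p_\nu v$ with $\p_\nu v$ possibly negative, and the mixed BVP of your third stage must then win the race: you need $4\p_\nu u > 4|\p_\nu v|$ pointwise on $\So$ to land the required strict inequality $H_h > H_g$ in conclusion (2), which has no $\eta$-slack. But $\p_\nu u$ is controlled from below by how much positive scalar curvature survives the smoothing and correction, and when $R(g) \equiv 0$ this also degenerates with $\delta$, so the competition between the two $\delta$-small quantities is genuinely unresolved. (Your ``tiny conformal bump'' to force $R \not\equiv 0$ introduces the same issue: a compactly supported bump has $\int_D \Delta(\text{bump}) = 0$, so $-8\Delta v + R v \geq 0$ cannot be arranged with $v \equiv 1$ near $\p D$ without relying on positive $R$ elsewhere.) The fix is essentially what the paper does: create a quantified, $\varepsilon$-sized mean curvature gain via an auxiliary pair of conformal factors \emph{before} the corner-smoothing step, then choose the smoothing parameter $\delta$ small relative to $\varepsilon$. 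Your step 3 (the mixed BVP, Hopf lemma at $\So$, strict inequality $u < 1$ at $\So'$) is correct as written once $R \geq 0$, $R \not\equiv 0$ is secured, and in fact mirrors the paper's own first step on $\Omega$.
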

\begin{proof}
    The following argument is motivated by  that used by  Jauregui in the proof of \cite[Proposition 7] {JJ}. 

    First, we reduce to the case of $R(g)$ being identically zero. If $R(g) \ge  0 $ but $ R(g)$ is not identically $0$,
consider a conformally deformed metric $\tilde g = u^4 g $ on $\Omega$ where $ u > 0 $ is the unique solution  to  
\be 
\lf\{ 
\begin{array}{rcl}
\Delta_g u - \frac18 R(g) u & = & 0  \ \  \mathrm{on} \ \Omega \\
u & = & 1  \ \ \mathrm{at} \ \So \\
\frac{\partial u}{\partial \nu} & = & 0 \ \ \mathrm{at} \ \Sh .
\end{array}
\ri.
\ee
Here $ \Delta_g $ is the Laplacian of $g$, and $\nu$ is the outward unit normal to $\p \Omega$ with respect to $g$.
Then $H_{\tilde{g}} = 0$ at $\Sh$, and $H_{\tilde{g}} = H_g + 4 \frac{\p u}{\p \nu}$ 
at $ \So$.
It follows from  the strong maximum principle and the fact $\frac{\partial u}{\partial \nu} = 0 $ at $ \Sh$
that the maximum of $ u$ is attained at $ \So$.
 Hence, by the the strong maximum principle again, $  \frac{\p u}{\p \nu} > 0 $ and 
$H_{\tilde g} > H_g $ at $ \So$.
 At this point, we replace our $(\Omega, g)$ with $(\Omega, \tilde{g})$. If $R(g)$ were identically zero to begin with, take $\tilde{g} = g$.

In any case, $H_{\tilde{g}} \geq H_g$ on $\So$ and $H_{\tilde{g}} = 0$ on $\Sh$. Given a small constant $ \varepsilon \in (0,1)$, 
 let $ \phi_1  > 0 $  
 be the harmonic function  on $ \Omega$ such that $ \phi_1 = 1$ at $\So$ and $ \phi_1 = 1 - \frac12 \varepsilon $ at $ \Sh$.
Let $  \phi_2 = ( 2 - \varepsilon) - \phi_1$.
Consider two conformally deformed metrics 
$ g_1 = \phi_1^4 \tilde{g} $ and $ g_2 = \phi_2^4 \tilde{g} $.
They satisfy the following properties:
\begin{enumerate}
\item[(i)] the induced metrics on $\Sh$ from  $ g_1$ and $g_2$ agree,
\item[(ii)] the mean curvature 
of $ \Sh$ in $(\Omega, g_1)$ with respect to the inward normal agrees with the mean curvature of $\Sh$ 
in $(\Omega, g_2)$ with respect to the outward normal,
\item[(iii)] the mean curvature $H_{g_1} $ of $\So$ in $(\Omega, g_1)$ 
satisfies  $H_{g_1}  > H_{\tilde{g}}$ by the strong maximum principle,  and
\item[(iv)] the mean curvature $H_{g_2}$ of $ \So$ in $(\Omega, g_2)$ remains positive and arbitrarily close to $H_{\tilde{g}}$, say $H_{g_2} > H_{\tilde{g}} - \eta/2$, if $\varepsilon $ is small enough. 
\end{enumerate}
Now attach $(\Omega, g_1)$ and $(\Omega, g_2)$ along $\Sh$, and call  the resulting manifold $(D, h)$.
In $(D, h)$, denote $ \So $ coming from $(\Omega, g_1)$ still by $ \So$ while denote $\So $ coming from
$ (\Omega, g_2)$ by $ \So'$.
If the metric $h$ were smooth across $\Sh$, then it satisfies all the properties  required. 
In general, we can replace $h$  with another metric that is obtained by 
applying \cite[Proposition 3.1]{Miao02} to $(D, h)$ at $ \Sh$ 
followed by a small conformal deformation that fixes the boundary.  The result follows.
\end{proof}

\section{$\F$ and $\L$ for a prescribed surface}

\label{sec-single-component}

We first prove

\begin{prop}[Finiteness for $\F$] \label{thm-main-finiteness-single}
Let $\Sigma$ be a 2-sphere.
Given a Riemannian metric $\gamma$ on $\Sigma$, 
there exists a constant $C > 0 $, depending only on $\gamma$, such that 
\be \label{eq-main-upper-bd}
\sup_{(\Omega, g) \in \F_{(\Sigma, \gamma)}} \int_{\p \Omega} H_g d \sigma < C.
\ee
As a result, 
\[ \L_{(\Sigma, \gamma)} := \sup \lf\{ \frac{1}{8 \pi}  \int_{\p \Omega} H_g d \sigma \ | \ (\Omega, g) \in \F \ri\} < \infty . \]
\end{prop}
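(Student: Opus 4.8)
The plan is to bound the total mean curvature $\int_{\partial\Omega} H_g\, d\sigma$ for $(\Omega,g)\in\F_{(\Sigma,\gamma)}$ by reducing to the case of a connected boundary and then invoking the Shi--Tam and Wang--Yau machinery. First I would use the Doubling Lemma (Lemma \ref{lem-doubling}): given $(\Omega,g)\in\F_{(\Sigma,\gamma)}$ with $\Sigma_H=\partial\Omega\setminus\So$ nonempty, doubling across $\Sigma_H$ produces a scalar-flat $(D,h)$ with connected-relevant boundary pieces $\So$ and $\So'$, with $H_h>H_g$ on $\So$ and $H_h'>H_g-\eta$ on $\So'$. Hence, up to an arbitrarily small error, it suffices to bound $\int_\Sigma H_h\, d\sigma$ over manifolds $(D,h)$ whose boundary $\So$ is isometric to $(\Sigma,\gamma)$ and whose \emph{other} boundary components (the mirror copies) have controlled mean curvature; alternatively, after the doubling one can discard the competitor structure entirely and simply observe that the supremum over $\F_{(\Sigma,\gamma)}$ is controlled by the supremum over compact scalar-flat (or $R\ge 0$) manifolds with a single distinguished boundary component isometric to $(\Sigma,\gamma)$, plus minimal inner boundary. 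So the core reduction is: bound $\int_\Sigma H_g\, d\sigma$ intrinsically when $\partial\Omega$ has $\So\cong(\Sigma,\gamma)$ and the rest is minimal.

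Next I would bring in the isometric embedding / quasi-local mass results. The key point is that Wang--Yau (\cite{WangYau06}) and Shi--Tam (\cite{ShiTam06}) provide, for a 2-sphere with an \emph{arbitrary} metric $\gamma$, an isometric embedding into a suitable reference space (Euclidean $\R^3$ after a conformal change, or hyperbolic space, or Minkowski) together with a positivity-of-mass type inequality that controls $\int_\Sigma H_g\, d\sigma$ from above by a quantity $C(\gamma)$ depending only on the intrinsic data of $(\Sigma,\gamma)$. Concretely, one uses the uniformization of $(\Sigma,\gamma)$ to a round sphere, writes $\gamma = e^{2\phi}\gamma_{\mathrm{round}}$ (or handles the case of nonpositive total curvature via a hyperbolic reference), embeds isometrically, and then the Shi--Tam monotonicity of the Brown--York-type quasi-local mass along the foliation by the embedded surface's outward flow gives that $\int_\Sigma H_g\, d\sigma$ is at most the total mean curvature of the reference surface, a finite number depending only on $(\Sigma,\gamma)$. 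This is precisely where the hypothesis that $\Sigma$ is a 2-sphere (rather than higher genus) is essential.

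Combining the two steps: for any $(\Omega,g)\in\F_{(\Sigma,\gamma)}$ we first apply the Doubling Lemma (or work directly if $\Sigma_H=\emptyset$) to reduce to estimating the mean curvature of a single 2-sphere boundary component with prescribed intrinsic metric $\gamma$, and then apply the Wang--Yau / Shi--Tam estimate to get $\int_{\partial\Omega} H_g\, d\sigma < C$ for a constant $C=C(\gamma)$ independent of the fill-in. Taking the supremum and dividing by $8\pi$ yields $\L_{(\Sigma,\gamma)}<\infty$. The main obstacle I expect is the bookkeeping in the reduction step: after doubling, the manifold $(D,h)$ has \emph{two} distinguished boundary components ($\So$ and $\So'$), so one cannot directly feed it into a single-boundary estimate; one must either iterate the doubling/cutting to isolate $\So$, or extend the Wang--Yau / Shi--Tam estimate to the multi-boundary setting (bounding $\int_{\So}H$ regardless of the geometry of $\So'$, using only that $\So'$ carries some mean-convex metric). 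Making sure the constant $C$ genuinely depends only on $\gamma$ — and not on the number or geometry of the extra minimal or mirror components — is the delicate part, and is presumably handled by noting that the quasi-local mass inequality for $\So$ only sees $(\Sigma,\gamma)$ on the input side.
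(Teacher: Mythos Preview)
Your proposal has a genuine gap: the doubling step is a detour that does not resolve the core difficulty you yourself identify. After applying Lemma~\ref{lem-doubling}, the resulting $(D,h)$ has \emph{two} mean-convex boundary components $\So$ and $\So'$, so you cannot feed it into a single-boundary Shi--Tam estimate. Cutting $(D,h)$ via Lemma~\ref{lem-cutting} to isolate $\So$ puts you right back into $\F_{(\Sigma,\gamma)}$ (mean-convex $\So$ plus minimal inner boundary), so you have gained nothing. Your sentence ``the core reduction is: bound $\int_\Sigma H_g\, d\sigma$ intrinsically when $\partial\Omega$ has $\So\cong(\Sigma,\gamma)$ and the rest is minimal'' is exactly the statement of the Proposition, not a reduction of it.

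The paper's argument avoids doubling entirely and supplies the missing mechanism. One chooses $\kappa>0$ with $K_\gamma>-\kappa^2$ and embeds $(\Sigma,\gamma)$ isometrically into $\H^3_{-\kappa^2}$ (Pogorelov). The crucial point --- which your proposal gestures at but does not locate --- is that the Shi--Tam/Wang--Yau hyperbolic inequality (Theorem~\ref{thm-ShiTam-H}) extends to compact 3-manifolds with \emph{additional} boundary components, provided those components satisfy $H\ge -2\kappa$; this extension goes through the Chru\'sciel--Herzlich positive mass theorem for asymptotically hyperbolic manifolds with inner boundary. For any $(\Omega,g)\in\F_{(\Sigma,\gamma)}$ one has $R(g)\ge 0>-6\kappa^2$ and the minimal components $\partial\Omega\setminus\So$ automatically satisfy $H=0>-2\kappa$, so the inequality applies directly to $(\Omega,g)$ and yields
\[
\int_{\So} H_g \cosh\big(\kappa\, r(p,\cdot)\big)\, d\sigma \;\le\; \int_{\Sigma_0} H_0 \cosh\big(\kappa\, r(p,\cdot)\big)\, d\sigma_0,
\]
whose right side depends only on $\gamma$ and whose left side dominates $(\cosh\kappa r_*)\int_{\So}H_g\,d\sigma$. (Nonorientable $\Omega$ are handled by passing to the orientation double cover.) The constant $C(\gamma)$ thus comes out cleanly with no dependence on the inner boundary, which is exactly the ``delicate part'' you were worried about.
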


To prove Proposition \ref{thm-main-finiteness-single}, we make use of 
the following result of Shi and Tam in \cite{ShiTam06},  which  is built on the work on Wang and Yau in \cite{WangYau06}.

\begin{thm} [\cite{ShiTam06}] \label{thm-ShiTam-H}
Let $(\Omega, g)$ be a compact,  orientable 
Riemannian 3-manifold  with boundary $\Sigma$, and scalar curvature $R(g) \ge - 6 \kappa^2$ for some constant $ \kappa > 0$.
Suppose $\Sigma $   is a topological 2-sphere with  Gauss curvature $ K > - \kappa^2$ and positive mean curvature $H$.
Let $\iota: \Sigma \rightarrow   \H^3_{- \kappa^2} $ be an isometric embedding and denote $ \Sigma_0 = \iota (\Sigma)$, which is a convex 
surface in $  \H^3_{- \kappa^2}$. 
Let $ D \subset  \H^3_{- \kappa^2} $ be the bounded region  enclosed by $ \Sigma_0$.
Then for any $ p \in  D$, 
\be  \label{eq-t-ShiTam-H}
  \int_{\Sigma} H  \cosh \kappa r(p,  \iota  (z) ) d \sigma (z) 
\le 
\int_{\Sigma_0} H_0   \cosh \kappa r (p, y)  d \sigma_0 (y) ,
\ee
where $ r ( p , \cdot) $ denotes the distance to $ p $ in $  \H^3_{- \kappa^2}$,   
$ d \sigma_0 $ is the area element on $ \Sigma_0$, and  $H_{0}$ is the mean curvature  of $\Sigma_0$ in $ \H^3_{- \kappa^2}$.
Moreover, equality in \eqref{eq-t-ShiTam-H} holds if and only if  $(\Omega, g)$ is isometric to the  domain  bounded by 
$\Sigma_0$ in $\H^3_{- \kappa^2}$.
\end{thm}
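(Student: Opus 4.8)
\emph{Proof proposal.} By rescaling the metric we may assume $\kappa = 1$, so that $\H^3_{-1} = \H^3$ and the hypotheses read $R(g) \geq -6$, $K > -1$, $H > 0$. The plan is to express the difference of the two weighted total mean curvatures as a boundary ``quasi-local mass'' term, to interpolate this term out to spatial infinity through a carefully constructed model exterior, and then to invoke a positive mass theorem for asymptotically hyperbolic manifolds. The static potentials $\cosh(d(p,\cdot))$, as $p$ ranges over $D$, are precisely the ones that detect the ``future cone'' of the hyperbolic mass functional, which is why the inequality \eqref{eq-t-ShiTam-H} holds for every such $p$ simultaneously.

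First I would construct, on the exterior $\H^3 \setminus D$ of the convex surface $\Sigma_0 = \iota(\Sigma)$, a comparison metric adapted to $(\Omega, g)$. Foliate $\H^3 \setminus D$ by the equidistant surfaces $\Sigma_r$ to $\Sigma_0$ and write the hyperbolic metric as $b = dr^2 + b_r$ on $[0,\infty) \times \Sigma$. Following the Shi--Tam extension scheme (the hyperbolic analogue of \cite{ShiTam02}, as carried out in \cite{ShiTam06} building on \cite{WangYau06}), solve for $u = u(r,\cdot) > 0$ the quasilinear parabolic equation in $r$ that makes $\bar g := u^2\, dr^2 + b_r$ satisfy $R(\bar g) = -6$, with the initial condition chosen so that the mean curvature of $\Sigma_0$ in $(\bar g)$ equals the given $H$ of $\Sigma$ in $(\Omega,g)$; solvability for all $r \geq 0$ uses $H > 0$ together with the convexity of $\Sigma_0$, which guarantees positivity of the mean curvature $H^{(b)}_0$ of every leaf $\Sigma_r$. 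One then checks that $\big([0,\infty)\times\Sigma,\ \bar g\big)$ is asymptotically hyperbolic.

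Next, glue $(\Omega, g)$ to this exterior along $\Sigma \cong \Sigma_0$ to form a complete asymptotically hyperbolic manifold $(\hat M,\hat g)$ with a Lipschitz metric across the gluing hypersurface. Since the induced metrics agree and the mean curvatures agree ($H$ computed from either side), the distributional scalar curvature of $\hat g$ is $\geq -6$ in the corner sense, so $\hat g$ can be mollified to a smooth metric with scalar curvature $\geq -6 - \varepsilon$ without changing the mass appreciably. For each $p \in D$ set $V_p := \cosh(d(p,\cdot))$ on the exterior, a static potential on $\H^3$, and define along the foliation
\[ \mathfrak{m}_p(r) := \frac{1}{8\pi}\int_{\Sigma_r}\Big[\big(H^{(b)}_0 - H_{\bar g}\big)\,V_p \;+\; (\text{lower-order term in } V_p,\ \partial_\nu V_p,\ \text{and } \mathrm{II}_{\Sigma_r})\Big]\, d\sigma_r . \]
The crux is the monotonicity $\frac{d}{dr}\mathfrak{m}_p(r) \leq 0$, which follows from $R(\bar g) = -6$, the defining ODE for $u$, and the static equation satisfied by $V_p$ in $\H^3$; this is the hyperbolic counterpart of the Shi--Tam monotonicity and is where the sign hypotheses enter essentially. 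At $r = 0$ the correction terms computed from $\bar g$ and from $b$ cancel, since the induced metric and the normal coordinate agree there, so $\mathfrak{m}_p(0) = \frac{1}{8\pi}\big(\int_{\Sigma_0} H_0 V_p\, d\sigma_0 - \int_\Sigma H\, V_p\, d\sigma\big)$. As $r \to \infty$, $\mathfrak{m}_p(r)$ converges to the $V_p$-component of the hyperbolic mass of $(\hat M,\hat g)$, which is $\geq 0$ by the positive mass theorem for asymptotically hyperbolic manifolds (Wang; Chru\'sciel--Herzlich), applied to the mollified metric and passing to the limit $\varepsilon \to 0$. Chaining, $0 \leq \lim_{r\to\infty}\mathfrak{m}_p(r) \leq \mathfrak{m}_p(0)$, which is exactly \eqref{eq-t-ShiTam-H}.

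For the rigidity statement, equality in \eqref{eq-t-ShiTam-H} for some $p$ forces $\mathfrak{m}_p(0) = 0$, hence $\mathfrak{m}_p(r) \equiv 0$ by monotonicity and nonnegativity; this forces $u \equiv 1$, i.e. $\bar g = b$ is the exact hyperbolic exterior, and the rigidity case of the asymptotically hyperbolic positive mass theorem together with the corner analysis then identifies $(\Omega, g)$ with the domain bounded by $\Sigma_0$ in $\H^3$. The main obstacle I anticipate is the monotonicity computation together with the careful bookkeeping of the hyperbolic correction terms, both in the definition of $\mathfrak{m}_p$ and in its boundary value at $r = 0$; the positive mass input, while indispensable, can be quoted, and the solvability and asymptotics of the extension are by now standard.
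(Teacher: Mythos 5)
This theorem is quoted from Shi--Tam \cite{ShiTam06} and the paper gives no proof of its own beyond the citation (it only later adapts that proof in Proposition \ref{prop-ShiTam-H-1} by substituting the Chru\'sciel--Herzlich positive mass theorem). Your sketch follows exactly the route of the cited argument --- Wang--Yau's hyperbolic Shi--Tam extension of the exterior of $\Sigma_0$, monotonicity of the $\cosh\kappa r$-weighted quasi-local quantity along the equidistant foliation, gluing across the corner, and the positive mass theorem for asymptotically hyperbolic spin manifolds together with its rigidity case --- so it is essentially the same approach as the proof the paper relies on.
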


\begin{rem}
The existence of such an embedding $\iota$ is given by a  theorem of Pogorelov \cite{Pogorelov}.
\end{rem}

\begin{rem}
Even though not explicitly stated, the proof of Theorem \ref{thm-ShiTam-H} in \cite{ShiTam06} assumes $\Omega$ is orientable since
the proof uses the fact that $\Omega$ is a spin manifold. 
\end{rem}

As the first step toward proving Proposition \ref{thm-main-finiteness-single}, we want to point out that Theorem \ref{thm-ShiTam-H} continues to hold 
 for manifolds with more than one boundary component under suitable 
boundary conditions; moreover, it holds without the orientability assumption. 

\begin{prop} \label{prop-ShiTam-H-1}
Let $(\Omega, g)$ be a compact, orientable
Riemannian 3-manifold  with boundary $\p \Omega$, and scalar curvature $R(g) \ge - 6 \kappa^2$ for some constant $ \kappa > 0$.
Suppose 
\begin{enumerate}
\item[(a)]  $ \p \Omega$ has a connected component $\Sigma $ that 
is a topological 2-sphere with  Gauss curvature $ K > - \kappa^2$ and  mean curvature $H > 0$, and
\item[(b)] $ \p \Omega \setminus \Sigma$, if nonempty, has mean curvature $ H \ge - 2 \kappa$.
\end{enumerate} 
Then the conclusion of Theorem \ref{thm-ShiTam-H} holds for such an $(\Omega, g)$. 
\end{prop}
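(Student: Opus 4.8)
The plan is to reduce Proposition \ref{prop-ShiTam-H-1} to Theorem \ref{thm-ShiTam-H} by surgery on the extra boundary components. The first order of business is to dispose of the components of $\p\Omega\setminus\Sigma$ on which $H\ge -2\kappa$ but $H$ is not $\ge 0$; the point is that $-2\kappa$ is exactly the mean curvature of a horosphere-type barrier in $\H^3_{-\kappa^2}$, so these should not obstruct anything. I would first conformally deform $g$ to a metric $\tilde g = u^4 g$ solving the boundary value problem $\Delta_g u - \tfrac18 R(g) u = 0$ on $\Omega$ (which has nonnegative zeroth-order coefficient only after noting $R(g)\ge -6\kappa^2$ is not quite nonnegative — so instead one uses the standard Yamabe-type operator with the deficit absorbed, or works directly with the inequality $R(\tilde g)\ge -6\kappa^2 u^{-4}\ge -6\kappa^2$ once $u\le 1$ is arranged) with $u=1$ on $\Sigma$ and a Robin/Neumann-type condition at $\p\Omega\setminus\Sigma$ chosen so that the scalar curvature lower bound $R(\tilde g)\ge -6\kappa^2$ is preserved while the mean curvature on $\Sigma$ only increases (as in the proof of Lemma \ref{lem-doubling}) and the mean curvature of the other components becomes exactly $-2\kappa$ — or, more simply, $\ge 0$ if we do not need the sharp constant. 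Since increasing $H$ on $\Sigma$ only strengthens the left side of \eqref{eq-t-ShiTam-H}, this reduction is harmless.

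Next I would eliminate the orientability hypothesis, exactly as in Lemma \ref{lem-cutting}: pass to the orientation double cover $\pi:\tilde\Omega\to\Omega$, noting that since $\Sigma$ is an orientable $2$-sphere, $\pi^{-1}(\Sigma)$ is two disjoint copies of $\Sigma$; pick one copy $\tilde\Sigma$, and observe that $(\tilde\Omega,\pi^*g)$ has scalar curvature $\ge -6\kappa^2$, has the isometric-embedding datum $(\Sigma,\gamma)$ on the component $\tilde\Sigma$, and all its other boundary components have $H\ge -2\kappa$. So it suffices to treat the orientable case, which is what Theorem \ref{thm-ShiTam-H} is stated for.

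The heart of the matter is to handle an orientable $\Omega$ whose boundary has, besides $\Sigma$, extra components with $H\ge 0$ (after the first step). Here I would fill in: cap off each such extra component by gluing on a suitable orientable manifold of scalar curvature $\ge -6\kappa^2$ whose boundary is that component with matching (or slightly smaller) mean curvature — for instance a collar built from Lemma 1.3 of \cite{M-S} in the hyperbolic-background version, or, when the component has positive first eigenvalue for $-\Delta+K$, a genuine filling of the type produced in Lemma \ref{lem-filling}, adapted to the $R\ge -6\kappa^2$ setting. After smoothing the glued metric along the seam via \cite[Proposition 3.1]{Miao02} together with a small conformal perturbation (which we may choose to cost us at most $\eta$ in the mean curvature of $\Sigma$ and to keep $R\ge -6\kappa^2$), we arrive at a closed-off, orientable $(\hat\Omega,\hat g)$ with a single boundary component $\Sigma$, satisfying all the hypotheses of Theorem \ref{thm-ShiTam-H}. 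Applying that theorem to $(\hat\Omega,\hat g)$ gives \eqref{eq-t-ShiTam-H} for $(\hat\Omega,\hat g)$, hence for $(\Omega,g)$ up to the $\eta$ loss, and letting $\eta\to 0$ finishes the inequality; the rigidity statement is inherited because equality forces $\eta = 0$ and forces the filling to have been trivial, i.e.\ $\p\Omega\setminus\Sigma=\emptyset$ and $(\Omega,g)$ isometric to the domain bounded by $\Sigma_0$ in $\H^3_{-\kappa^2}$.

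\textbf{Main obstacle.} The delicate point is the first step: producing a conformal (or other) deformation that simultaneously (i) does not destroy the scalar curvature bound $R(g)\ge -6\kappa^2$ — which, unlike $R\ge 0$, is not conformally scale-invariant and degrades under shrinking — and (ii) pushes the mean curvature of the auxiliary boundary components up to (or past) the horospherical value $-2\kappa$ while not decreasing $H$ on $\Sigma$. One expects to need to exploit that $-2\kappa$ is precisely the borderline value making the relevant comparison barrier in $\H^3_{-\kappa^2}$ work, so the maximum-principle argument has to be run against that hyperbolic background rather than the Euclidean one used in Lemma \ref{lem-doubling}. Getting the signs and the boundary conditions of the auxiliary elliptic problem exactly right, and checking that the smoothing step of \cite[Proposition 3.1]{Miao02} can be carried out while staying in the class $R\ge -6\kappa^2$, is where the real work lies.
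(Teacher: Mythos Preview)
Your approach is genuinely different from the paper's, and it has a real gap at the filling step. The extra boundary components $\p\Omega\setminus\Sigma$ carry \emph{no} topological restriction in the hypotheses of the proposition: they can be surfaces of any genus, and their only constraint is the mean-curvature bound $H\ge -2\kappa$. The constructions you invoke --- Lemma~1.3 of \cite{M-S} and the filling Lemma~\ref{lem-filling} --- are specific to $2$-spheres satisfying a spectral positivity condition $\lambda_1(-\Delta+K)>0$; there is no analogue that caps off an arbitrary-genus surface with a compact region of scalar curvature $\ge -6\kappa^2$ and matched (or larger) mean curvature. So step three of your plan cannot be carried out in general, and the reduction to a single-boundary-component manifold on which Theorem~\ref{thm-ShiTam-H} applies does not go through. (Your first step, the conformal deformation keeping $R\ge -6\kappa^2$ while raising the mean curvature of the inner components, is also problematic for the reason you yourself flag, but even granting it the topology obstacle remains. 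Incidentally, the proposition already assumes orientability, so your second step is unnecessary here; the nonorientable case is the content of the separate Proposition~\ref{prop-ShiTam-H}.)

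The paper avoids surgery altogether. Its argument re-runs the Shi--Tam/Wang--Yau proof of Theorem~\ref{thm-ShiTam-H}: one attaches to $\Sigma$ the quasi-spherical exterior in $\H^3_{-\kappa^2}$ produced by \cite{WangYau06}, obtaining an asymptotically hyperbolic manifold that now carries an \emph{inner boundary} $\p\Omega\setminus\Sigma$. The key input is that the Chru\'{s}ciel--Herzlich positive mass theorem \cite[Theorem~4.7]{C-H} tolerates precisely such an inner boundary provided $H\ge -(n-1)\kappa$ there --- which is exactly hypothesis (b). Plugging this strengthened positive mass theorem into the Shi--Tam monotonicity argument yields \eqref{eq-t-ShiTam-H} directly, with no need to cap anything off. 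The value $-2\kappa$ is thus used not as a barrier for a deformation, but as the sharp threshold in the spinorial positive mass theorem with boundary.
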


\begin{proof} 
As $ \Omega$ is an oriented 3-manifold, $\Omega$ is spin. 
In \cite[Theorem 4.7]{C-H}, Chru\'{s}ciel and Herzlich proved that 
if $(M, g)$ is an $n$-dimensional, spin, asymptotically hyperbolic manifold with a compact boundary $\p M$ 
such that  
$$ R(g) \ge - n(n-1) \kappa^2  \ \ \mathrm{and} \ \  H \ge - (n-1) \kappa , $$
where  $R(g)$ is the scalar curvature of $g$ and $H$ is the mean curvature of $\p M$ (with respect to the outward normal), 
then  the positive mass theorem holds on such an $(M, g)$. 
Now going through the same proof of Theorem \ref{thm-ShiTam-H} in \cite{ShiTam06}  which involves 
carrying out the same construction in \cite{WangYau06}, but replacing the positivity of the  mass expression used in \cite{WangYau06}
 by the positivity of the mass provided in  \cite[Theorem 4.7]{C-H}, one concludes that 
Theorem \ref{thm-ShiTam-H} holds for such an $(\Omega, g)$. 
\end{proof}

\begin{prop} \label{prop-ShiTam-H}
Proposition \ref{prop-ShiTam-H-1} continues to hold without the orientability assumption on $ \Omega$.
\end{prop}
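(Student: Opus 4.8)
The plan is to lift everything to the orientation double cover and apply Proposition \ref{prop-ShiTam-H-1} there, then descend the resulting inequality back to $\Omega$. So suppose $(\Omega, g)$ is non-orientable and satisfies hypotheses (a) and (b) of Proposition \ref{prop-ShiTam-H-1}, with distinguished boundary sphere $\Sigma$. Let $\pi : \tilde\Omega \to \Omega$ be the orientation double cover, equipped with the pulled-back metric $\tilde g = \pi^* g$. Since $R(\tilde g) = R(g) \circ \pi \ge -6\kappa^2$, and mean curvatures are preserved under the local isometry $\pi$, the boundary $\p\tilde\Omega = \pi^{-1}(\p\Omega)$ still has all components with $H \ge -2\kappa$. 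The key topological point is that $\Sigma$, being a $2$-sphere, is simply connected, so $\pi^{-1}(\Sigma)$ is a trivial double cover: it is the disjoint union of two isometric copies $\Sigma', \Sigma''$ of $(\Sigma, \gamma)$, each mapped isometrically onto $\Sigma$. In particular $\tilde\Omega$ is an orientable compact $3$-manifold whose boundary has a connected component $\Sigma'$ that is a topological $2$-sphere with $K > -\kappa^2$ and $H > 0$, and all other boundary components have $H \ge -2\kappa$.

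Now apply Proposition \ref{prop-ShiTam-H-1} to $(\tilde\Omega, \tilde g)$ with distinguished component $\Sigma'$. Since $(\Sigma', \gamma)$ is isometric to $(\Sigma, \gamma)$, it admits the same isometric embedding $\iota : \Sigma \to \H^3_{-\kappa^2}$ with image $\Sigma_0$ enclosing the region $D$, and the conclusion gives, for every $p \in D$,
\[
\int_{\Sigma'} \tilde H \, \cosh\!\big(\kappa\, r(p, \iota(z))\big)\, d\tilde\sigma(z)
\le \int_{\Sigma_0} H_0 \, \cosh\!\big(\kappa\, r(p,y)\big)\, d\sigma_0(y),
\]
with equality if and only if $(\tilde\Omega, \tilde g)$ is isometric to the domain bounded by $\Sigma_0$ in $\H^3_{-\kappa^2}$. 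But $\pi|_{\Sigma'} : \Sigma' \to \Sigma$ is an isometry carrying $\tilde H$ to $H$ and $d\tilde\sigma$ to $d\sigma$, so the left-hand integral over $\Sigma'$ equals $\int_\Sigma H \cosh(\kappa\, r(p,\iota(z)))\, d\sigma(z)$. This is exactly inequality \eqref{eq-t-ShiTam-H} for $(\Omega, g)$, which is the conclusion of Theorem \ref{thm-ShiTam-H}.

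It remains to handle the equality case on $\Omega$. If equality holds for some $p \in D$ in the inequality for $(\Omega, g)$, then it holds for $(\tilde\Omega, \tilde g)$, so $(\tilde\Omega, \tilde g)$ is isometric to the domain $\bar D \subset \H^3_{-\kappa^2}$ bounded by $\Sigma_0$. But $\bar D$ is simply connected, hence its own orientation double cover only in the trivial (disconnected) sense — more precisely, a connected orientable manifold like $\tilde\Omega$ cannot be a non-trivial double cover of anything unless it is disconnected, so the covering $\pi$ must already have been trivial, i.e. $\Omega$ was orientable, contrary to assumption; alternatively, if one allows $\tilde\Omega$ disconnected one checks $\Omega \cong \bar D$ directly as the quotient. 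Either way $(\Omega, g)$ is isometric to $\bar D$, which completes the characterization of equality.

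The main obstacle, and the step to be careful about, is the claim that $\pi^{-1}(\Sigma)$ splits as two copies of $\Sigma$: this uses that $\Sigma$ is a $2$-sphere (simply connected), so the restricted orientation cover is trivial; if $\Sigma$ had positive genus this could fail and the argument would need modification. A secondary point requiring a clean statement is the equality case: the cleanest route is to observe that a connected orientable $3$-manifold that is isometric to the simply connected domain $\bar D$ cannot non-trivially double-cover $\Omega$, forcing $\Omega$ itself to be orientable (contradiction) unless the cover was disconnected, in which case $\Omega$ is directly identified with $\bar D$.
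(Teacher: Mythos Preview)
Your approach is the same as the paper's: pass to the orientation double cover $(\tilde\Omega,\tilde g)$, observe that $\pi^{-1}(\Sigma)$ splits into two isometric copies of the sphere (because $\Sigma$ is simply connected), apply Proposition~\ref{prop-ShiTam-H-1} upstairs to one of those copies, and identify the resulting integral with the one over $\Sigma$. The inequality part is correct and matches the paper essentially verbatim.

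Your treatment of the equality case, however, is muddled. The sentence ``a connected orientable manifold like $\tilde\Omega$ cannot be a non-trivial double cover of anything unless it is disconnected'' is simply false: $S^2$ is a connected orientable manifold that nontrivially double covers $\mathbb{RP}^2$, and more generally any simply connected space can be a nontrivial finite cover. Simple connectedness of $\bar D$ alone does not force the cover $\pi$ to be trivial. The clean way to dispose of the equality case is to count boundary components: if $\Omega$ is nonorientable then $\tilde\Omega$ is connected and $\partial\tilde\Omega$ contains at least the two disjoint spheres $\Sigma'$ and $\Sigma''$, whereas $\bar D$ has connected boundary $\Sigma_0$; hence $\tilde\Omega$ cannot be isometric to $\bar D$ and equality never occurs for nonorientable $\Omega$. (The paper itself glosses over this point, so you are right to flag it as needing care---but the argument you wrote does not work as stated.)
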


\begin{proof}
Let $ (\Omega, g)$ be a compact Riemannian 3-manifold satisfying all assumptions in Proposition \ref{prop-ShiTam-H-1}
 except that $\Omega$ is nonorientable.
 Let $ \tilde \Omega $ be the orientation double cover of $ \Omega$ and let 
 $\pi : \tilde \Omega \rightarrow \Omega$ 
 be the corresponding covering map. 
 It is easily seen that $ \p \tilde \Omega = \pi^{-1} ( \p \Omega)$ and 
 $ \p \tilde \Omega  $ doubly covers  $ \p \Omega$. 
 Let $ S  = \p \Omega \setminus \Sigma$, which can be empty.
 Let $ \tilde{S}  = \pi^{-1} (S ) $ and 
 $ \tilde \Sigma = \pi^{-1} (\Sigma)$. Since $ \Sigma$ is a 2-sphere and 
 $ \tilde \Sigma $ doubly covers $ \Sigma$, $\tilde \Sigma$ is the disjoint union of 
 two 2-spheres, which we denote by $\tilde \Sigma^{(1)}$ and $ \tilde \Sigma^{(2)}$. 
 Now let $ \tilde g = \pi^* g $ on $ \tilde \Omega$, which has scalar curvature $ R (\tilde g) \ge - 6 \kappa^2$.
Let $ \tilde H $ denote the mean curvature of $ \p \tilde \Omega = \tilde S \cup  \tilde \Sigma^{(1)} \cup \tilde \Sigma^{(2) }  $
in $(\tilde \Omega, \tilde g)$  (with respect to the outward normal).
Then 
$ \tilde H \ge - 2 \kappa $ on $ \tilde S$ and $ \tilde H > 0 $ on $ \Sigma^{(i)}$, $ i =1, 2$. 
Hence, Proposition \ref{prop-ShiTam-H-1} is applicable to  $(\tilde \Omega, \tilde g)$. 
Therefore,   the conclusion of Theorem \ref{thm-ShiTam-H} holds for each $ \tilde \Sigma^{(i)}$, $i=1,2$, which
in turn shows that the same is true for $\Sigma$ in $(\Omega, g)$. 
\end{proof}

\begin{proof}[Proof of Proposition \ref{thm-main-finiteness-single}]
Let $ \kappa > 0 $ be a constant such that
$ K > - \kappa^2  $
where $ K$ is the Gauss curvature of $\gamma$.
Let $ \iota$ be an isometric embedding of $(\Sigma, \gamma)$ in $ \H^3_{-\kappa^2} $.
Let $ D  \subset  \H^3_{-\kappa^2}  $  be the bounded region  enclosed by   
the convex surface $ \Sigma_0  =  \iota (\Sigma)$.  
Let $ H_0 > 0 $ be the mean curvature of $ \Sigma_0 $ in $  \H^3_{-\kappa^2}$ 
and let $ p \in D$ be a fixed  point.

Take $(\Omega, g) \in \F_{(\Sigma, \gamma)}$, then
$ R(g) \ge 0 > - 6 \kappa^2 $ in $\Omega$ 
and
$ H = 0 > - 2 \kappa $ at $ \p \Omega \setminus \So$.
By Proposition \ref{prop-ShiTam-H}, the conclusion of Theorem \ref{thm-ShiTam-H} holds
for  $(\Omega, g)$ at $\So$, i.e.  
\be  \label{eq-ShiTam-H}
 \int_{\So} H  \cosh \kappa r(p,  \iota \circ \phi  (z) ) d \sigma (z) 
\le 
\int_{\Sigma_0} H_0   \cosh \kappa r (p, y)  d \sigma_0 (y) ,
\ee
where   $ \phi$ is a  given isometry between $\So$ and $ (\Sigma, \gamma)$. 
Let 
$$ r_{*} = \min \{  r ( p , y) \ | \ y \in \Sigma_0 \} > 0 ,$$
 then \eqref{eq-ShiTam-H} gives 
\be \label{eq-upper-bd-1}
\int_{\So} H d \sigma \le  ( \cosh \kappa r_{*})^{-1}  \int_{\Sigma_0} H_0   \cosh \kappa r (p, y)  d \sigma_0 (y).
\ee
Clearly, the right-hand side of \eqref{eq-upper-bd-1} is a  constant determined only by $ (\Sigma, \gamma)$. 
This proves Proposition \ref{thm-main-finiteness-single}.
\end{proof}

\begin{rem}[Weak mean-convexity]
In  Proposition \ref{thm-main-finiteness-single}, the assumption $ H_g > 0$  at $\So$ for an $(\Omega, g) \in \F_{(\Sigma,\gamma)}$
can be relaxed to $ H_g \ge 0$. 
To see this, suppose $ (\Omega, g)$ 
satisfies all the assumptions
imposed on an ``fill-in" in Definition \ref{df-F}, 
except that $\So$ is  weakly mean-convex, i.e., $ H_g \ge 0$ at $\So$.
 Let $ \phi \ge 0 $ be a fixed, nontrivial  function on $ \Omega$. Let $ w $ be the unique solution to 
\be
\lf\{ 
\begin{array}{rcl}
\Delta_g w - \frac18 R(g) w & = \ \phi  &   \mathrm{on} \ \Omega \\
w & = \  0  &   \mathrm{at} \ \p \Omega .
\end{array}
\ri.
\ee
Then $ w \le 0 $ on $ \Omega$ and  $ \frac{\p w}{\p \nu} >  0 $ at $\p \Omega$
by the strong maximum principle, where $ \nu $ is the outward unit normal 
to $ \p \Omega$ in $(\Omega, g)$. 
Given a small $\varepsilon > 0 $,  consider  $ g_\varepsilon  = ( 1 + \varepsilon w )^4 g $.
Then
$ H_{g_\varepsilon} = H_g + 4 \varepsilon \frac{\p w}{\p \nu} > 0 $ at $ \So $, 
$ H_{g_\varepsilon} = 4 \varepsilon  \frac{\p w}{\p \nu} > 0 $ at $ \p \Omega \setminus \So$, 
and
\be \label{eq-Rge}
R (g_\varepsilon) = - ( 1 + \varepsilon w )^{-5} \lf[ \varepsilon ( 8  \Delta_g w - R(g) w) - R(g)   \ri].
\ee
Now we repeat the proof of Proposition \ref{thm-main-finiteness-single},  with $g$ replaced by $ g_\varepsilon$.
Note that the assumptions $R(g) \ge 0 $
and $H=0 $ at $\p \Omega \setminus \So$  in the  
proof of Proposition \ref{thm-main-finiteness-single} are only used to 
yield  $ R(g) > - 6 \kappa^2$ and $ H> - 2 \kappa$ at $\p \Omega \setminus \So$.
On the other hand, if $\varepsilon $ is sufficiently small, 
 $ R(g_\varepsilon) > - 6 \kappa^2 $ by 
\eqref{eq-Rge} and the fact  $R(g) \ge 0$. 
We already know $H_{g_\varepsilon} > 0$ at $\p \Omega \setminus \So$.
Therefore, the same proof leading to \eqref{eq-upper-bd-1} gives 
\be \label{eq-He}
 \int_{\So} H_{g_\varepsilon}  \, d \sigma < C 
 \ee
where $ C$ is the quantity  that is on the right-side of \eqref{eq-upper-bd-1}.
As  $ H_g < H_{g_\varepsilon}$, we conclude \eqref{eq-He} holds with $H_{g_\varepsilon}$ replaced by $H_g$.
Thus, the claim in this remark follows.
\end{rem}

Next, we want to prove

\begin{prop} [Rigidity in  $\F$] \label{thm-main-rigidity-single}
Let $\Sigma$ be an arbitrary closed, connected, orientable surface, endowed with a Riemannian metric $\gamma$. 
If there exists $(\Omega, g) \in \F_{(\Sigma, \gamma)}$ attaining the
supremum $\L_{(\Sigma,\gamma)}$, i.e., if
\[  \frac{1}{8\pi} \int_{\p \Omega} H_g d \sigma = \L_{(\Sigma, \gamma)} \text{,} \]
then $(\Omega, g)$ is isometric to a mean-convex handlebody with flat interior, whose genus is that of $\Sigma$. In particular, if $\operatorname{genus}(\Sigma_1) = 0$ then $(\Omega, g)$ is an Alexandrov embedded mean-convex ball in $\R^3$.
\end{prop}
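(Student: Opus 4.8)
The plan is to use maximality to strip away, in turn, positive scalar curvature and then extra boundary components of $(\Omega,g)$, reducing to a scalar-flat metric with a single connected mean-convex boundary, and finally to identify it as flat. \emph{Reduction to $R(g)\equiv 0$:} if $R(g)\not\equiv 0$, let $u>0$ solve $\Delta_g u=\tfrac18 R(g)u$ on $\Omega$ with $u\equiv 1$ on $\So$ and $\tfrac{\partial u}{\partial\nu}\equiv 0$ on $\Sh:=\partial\Omega\setminus\So$ (solvable, and $u>0$, since $R(g)\ge 0$). Then $\tilde g=u^4 g$ is scalar-flat, induces $\gamma$ on $\So$, keeps $\Sh$ minimal, and $H_{\tilde g}=H_g+4\tfrac{\partial u}{\partial\nu}\ge H_g$ on $\So$, so $(\Omega,\tilde g)\in\F_{(\Sigma,\gamma)}$; since $\int_{\So}\tfrac{\partial u}{\partial\nu}\,d\sigma=\int_\Omega\Delta_g u\,dv=\tfrac18\int_\Omega R(g)u\,dv>0$, we get $\int_{\partial\Omega}H_{\tilde g}\,d\sigma>\int_{\partial\Omega}H_g\,d\sigma=8\pi\L_{(\Sigma,\gamma)}$, contradicting maximality. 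Hence $R(g)\equiv 0$.

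\emph{Reduction to connected boundary:} if $\Sh\ne\emptyset$, the Doubling Lemma~\ref{lem-doubling}, applied with $\eta\in(0,\min_{\So}H_g)$, produces a scalar-flat $(D,h)$ with $\partial D=\So\cup\So'$ both mean-convex, $\So$ isometric to $(\Sigma,\gamma)$, and $H_h>H_g$ on $\So$; thus $(D,h)$ is a two-component fill-in. The Cutting Lemma~\ref{lem-cutting} then gives $(\Omega_1,g_1)\in\F_{(\Sigma,\gamma)}$ whose boundary component $\So$ has mean curvature $H_h$ and whose remaining boundary components are minimal, so $8\pi\L_{(\Sigma,\gamma)}\ge\int_{\partial\Omega_1}H_{g_1}\,d\sigma=\int_{\So}H_h\,d\sigma>\int_{\So}H_g\,d\sigma=\int_{\partial\Omega}H_g\,d\sigma=8\pi\L_{(\Sigma,\gamma)}$, a contradiction. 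So $\partial\Omega=\So$, and $(\Omega,g)$ is now a scalar-flat fill-in with connected, strictly mean-convex boundary $(\Sigma,\gamma)$ maximizing $\int_\Sigma H_g\,d\sigma$.

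\emph{Flatness.} When $\gamma$ has positive Gauss curvature (so $\Sigma\cong S^2$) this is immediate: the flat Weyl fill-in $(B^3,\delta)$ lies in $\F_{(\Sigma,\gamma)}$, so $\int_\Sigma H_g\,d\sigma=8\pi\L_{(\Sigma,\gamma)}\ge\int_\Sigma H_0\,d\sigma\ge\int_\Sigma H_g\,d\sigma$ by Theorem~\ref{thm-ShiTam}, forcing equality and hence $(\Omega,g)\cong(B^3,\delta)$ by the rigidity statement there. For general $\gamma$ the mechanism is: if $g$ were not flat, one could strictly increase $\int_\Sigma H_g\,d\sigma$ while keeping $R\ge 0$ and the boundary metric fixed — perform a Corvino-type scalar-curvature-increasing deformation supported in a small interior ball (so $\partial\Omega$ and its mean convexity are untouched), making $R\ge 0$ with $R>0$ somewhere, then conformally rescale as in the first step to restore $R\equiv 0$ and strictly increase $\int_\Sigma H$ — contradicting maximality. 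Such a localized deformation is obstructed only where $g$ carries a nontrivial static potential, so failure of flatness would make $g$ locally static vacuum; but a compact scalar-flat static vacuum metric with connected mean-convex boundary is flat: tracing $\Hess f=f\,\Ric$ with $R=0$ gives $\Delta f=0$, the first-variation boundary terms force $f$ to be a positive constant on $\Sigma$, hence $f$ is constant on the connected $\Omega$, hence $\Ric\equiv 0$, hence $g$ is flat (dimension three). Making this dichotomy precise — a maximizer is either locally deformable or static, and the static case is flat — is the main obstacle, and is where a Lagrange-multiplier/first-variation analysis together with a static-metric uniqueness argument are needed.

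\emph{Topology and embedding.} Finally, a compact flat $3$-manifold with connected, strictly mean-convex boundary is a handlebody whose genus equals that of its boundary (mean convexity rules out interior area-minimizing obstructions and the developing map assembles $\Omega$ from convex pieces); when $\mathrm{genus}(\Sigma)=0$ it is a $3$-ball, hence simply connected, so the developing map is defined on all of $\Omega$ and realizes $(\Omega,g)$ as an Alexandrov-embedded mean-convex ball in $\R^3$.
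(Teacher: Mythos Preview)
Your overall architecture matches the paper's: eliminate the minimal boundary components via doubling-then-cutting, kill positive scalar curvature by a conformal deformation, then argue flatness and finally the handlebody topology. You run the first two steps in the opposite order from the paper (the paper first invokes its Lemma~\ref{lem-no-horizon} to get $\partial\Omega=\So$, then conformally deforms with the pure Dirichlet problem $u=1$ on $\partial\Omega$), but your mixed Dirichlet/Neumann version is exactly the argument the paper uses inside its Doubling Lemma, so this reordering is harmless. One small sloppiness: you assert $H_{\tilde g}\ge H_g$ pointwise on $\So$ but only argue the integral inequality; the pointwise version (needed for $(\Omega,\tilde g)\in\F$) does hold, by the strong maximum principle as in the paper's proof of Lemma~\ref{lem-doubling}.

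The genuine gap is your flatness step. You yourself flag it (``Making this dichotomy precise \ldots is the main obstacle''), and indeed your Corvino-deformation/static-potential sketch is not a proof: the claim that ``the first-variation boundary terms force $f$ to be a positive constant on $\Sigma$'' is neither formulated precisely nor justified, and it is not clear what boundary condition your Lagrange multiplier $f$ is supposed to satisfy or why. The paper bypasses all of this: once $\partial\Omega$ is connected and $R(g)\equiv 0$, the metric $g$ is a critical point of $\bar g\mapsto\int_{\partial\Omega}H_{\bar g}\,d\sigma$ on $\mathcal{M}^0_g=\{\bar g: R(\bar g)=0,\ \bar g|_{T\partial\Omega}=g|_{T\partial\Omega}\}$, and the paper simply cites \cite[Corollary~2.1]{MiaoShiTam09} (cf.\ also \cite[Lemma~4]{EMW}), which asserts that such a critical point is Ricci-flat, hence flat in dimension three. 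That cited result is precisely the ``Lagrange-multiplier/static uniqueness'' package you are gesturing at, but carried out rigorously.

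Your topology paragraph is also too vague. The paper's argument is concrete: suppose a closed embedded minimal surface $T$ sits in the interior; take a minimizing geodesic $\Gamma$ from $T$ to $\partial\Omega$ and sum the second variation of length over an orthonormal basis of normal variations to get
\[
\sum_{i=1}^2 \delta^2\Gamma(V_i,V_i)= -\int_0^\ell \Ric_g(\Gamma',\Gamma')\,ds - H_{\partial\Omega}(\Gamma(\ell)) - H_T(\Gamma(0)) = -H_{\partial\Omega}(\Gamma(\ell))<0,
\]
using flatness of $g$ and minimality of $T$; this contradicts the minimizing property of $\Gamma$. With no interior minimal surfaces, Meeks--Simon--Yau \cite{MeeksSimonYau} forces $\Omega$ to be a handlebody. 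Your phrase ``mean convexity rules out interior area-minimizing obstructions and the developing map assembles $\Omega$ from convex pieces'' does not substitute for this.
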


One  step in proving Proposition \ref{thm-main-rigidity-single} is to 
exclude the possibility of having minimal boundary components appear on a maximizing fill-in. We do this by using minimal boundary components as a tool that helps us increase the mean curvature of $\So$.

\begin{lemma} \label{lem-no-horizon}
Let $(\Omega, g) \in \F_{(\Sigma, \gamma)}$. If $ \p \Omega \setminus \So \neq \emptyset$, then 
 $$ \frac{1}{8 \pi} \int_{\p \Omega} H_g d \sigma <  \L_{(\Sigma, \gamma)} .$$ 
\end{lemma}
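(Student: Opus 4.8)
The plan is to take a fill-in $(\Omega, g) \in \F_{(\Sigma,\gamma)}$ whose extra boundary $\Sigma_H = \p\Omega \setminus \So$ is nonempty, and to produce from it a strictly better competitor in $\F_{(\Sigma,\gamma)}$, so that $\frac{1}{8\pi}\int_{\p\Omega} H_g \, d\sigma < \L_{(\Sigma,\gamma)}$. The point is that in the definition of $\L_{(\Sigma,\gamma)}$, only the $\So$-integral is counted: indeed, since every component of $\Sigma_H$ is minimal, $\int_{\p\Omega} H_g \, d\sigma = \int_{\So} H_g \, d\sigma$, so we only need a new fill-in on which the total mean curvature integral over the outer boundary strictly exceeds $\int_{\So} H_g\, d\sigma$.

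First I would reduce, via a conformal change as in the first paragraph of the proof of Lemma \ref{lem-doubling}, to the case $R(g) \equiv 0$: solving $\Delta_g u - \tfrac18 R(g) u = 0$ with $u = 1$ on $\So$ and $\p u / \p\nu = 0$ on $\Sigma_H$ and passing to $\tilde g = u^4 g$ keeps $\Sigma_H$ minimal, makes it scalar-flat, and by the strong maximum principle strictly increases the mean curvature on $\So$; this already uses up the slack I want, so I only need to not lose it back. Next, I would like to invoke Lemma \ref{lem-filling} or Lemma \ref{lem-doubling} to cap off or double across $\Sigma_H$ while controlling the loss on $\So$ — but those lemmas require the components of $\Sigma_H$ to be 2-spheres and (for Lemma \ref{lem-filling}) stable minimal, and at this stage $\Sigma_H$ is merely ``if nonempty, a minimal surface'' per Definition \ref{df-F}. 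So the cleanest route is: replace $\Omega$ by the component $\Omega'$ of $\Omega$ lying to one side of an \emph{area-minimizing} representative $S$ in the homology class $[\So]$ (exactly the Cutting Lemma \ref{lem-cutting} move applied to the single component $\So$), which makes the new inner-ish boundary a disjoint union of stable, orientable minimal surfaces with the same mean curvature on $\So$; if $S$ is empty then $\So$ already bounds and we are in the case $\Sigma_H$ consists of whatever stable minimal surfaces remain. I would then handle the genus of the stable minimal pieces: if they are all 2-spheres, apply Lemma \ref{lem-filling} with $\eta$ smaller than the strict gain from the conformal step, landing in $\FF_{(\Sigma,\gamma)} \subseteq \F_{(\Sigma,\gamma)}$ with strictly larger $\int_{\So} H\,d\sigma$, done. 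If some stable minimal component has positive genus, a capping-by-$3$-balls argument is not available, and here I would instead use the Doubling Lemma \ref{lem-doubling}: doubling across $\Sigma_H$ produces $(D, h)$ with $H_h > H_g$ on $\So$ and $H_h' > H_g - \eta$ on $\So'$, but $\So'$ is an extra boundary component that is \emph{not} minimal, so $(D,h) \notin \F_{(\Sigma,\gamma)}$ — this is the honest gap, and to close it I would first double, then apply the Cutting Lemma to $\So$ inside $D$ to chop off $\So'$ along an area-minimizer, returning to a fill-in with stable minimal spheres \emph{or} positive-genus pieces. The positive-genus case genuinely needs an extra idea — perhaps iterating, or observing that after doubling the topology simplifies, or appealing directly to the argument in \cite[Proposition 7]{JJ} that Lemma \ref{lem-doubling} is modeled on.

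\textbf{Main obstacle.} The hard part will be dispensing with non-spherical minimal boundary components: Lemma \ref{lem-filling} is stated only for stable minimal \emph{2-spheres}, and a positive-genus stable minimal surface in a scalar-flat manifold need not even exist generically, but Definition \ref{df-F} permits $\Sigma_H$ to be an arbitrary minimal surface, so the argument must either rule such pieces out a priori (e.g.\ by the area-minimization/cutting step forcing the relevant pieces to be what Lemma \ref{lem-filling} can handle in combination with the conformal gain) or treat them by doubling. I expect the actual proof to proceed: conformal reduction to $R(g) \equiv 0$ with strict gain $\delta > 0$ on $\So$; then doubling across $\Sigma_H$ (Lemma \ref{lem-doubling}) with $\eta < \delta$, which yields a scalar-flat $(D,h)$; then noting $H_h > H_g$ on $\So$ while the mirror $\So'$ has $H_h' > H_g - \eta > 0$; and finally handling $\So'$ — either it is itself acceptable as part of a larger $\F$-type comparison, or one cuts along a minimizer homologous to $\So$ in $D$ to excise it. In the end $\frac{1}{8\pi}\int_{\So} H_{\mathrm{new}}\, d\sigma \geq \frac{1}{8\pi}\int_{\So}(H_g + \delta)\, d\sigma > \frac{1}{8\pi}\int_{\p\Omega} H_g\, d\sigma$, and since the new fill-in lies in $\F_{(\Sigma,\gamma)}$ (or $\FF_{(\Sigma,\gamma)} \subseteq \F_{(\Sigma,\gamma)}$), the right-hand side is $\leq \L_{(\Sigma,\gamma)}$, giving the strict inequality claimed.
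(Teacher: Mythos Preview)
Your proposal does eventually land on the paper's approach --- \emph{double across $\Sigma_H$ via Lemma~\ref{lem-doubling}, then cut via Lemma~\ref{lem-cutting} to isolate $\So$} --- but you don't recognize that this two-step move already finishes the proof. The confusion is in two places.

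First, the Doubling Lemma~\ref{lem-doubling} carries \emph{no} hypothesis on the genus of the components of $\Sigma_H$; it applies to any $(\Omega,g)\in\F_{(\Sigma,\gamma)}$ with $\Sigma_H\neq\emptyset$. So your worry about ``non-spherical minimal boundary components'' is a red herring for the doubling step. Moreover, the strict inequality $H_h>H_g$ on $\So$ in Lemma~\ref{lem-doubling} does not come from the scalar-flattening conformal change (which gives only $H_{\tilde g}\ge H_g$, and indeed nothing when $R(g)\equiv 0$); it comes from the second deformation by the harmonic function $\phi_1$, which is genuinely non-constant because $\Sigma_H\neq\emptyset$. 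Your plan to extract a ``strict gain $\delta>0$'' from the first conformal step would fail on a scalar-flat fill-in.

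Second, after doubling you have $(D,h)\in\FF_{(\Sigma,\gamma),(\Sigma,\gamma')}$ with $H_h>H_g$ on $\So$ and $H_h'>0$ on $\So'$ (take $\eta$ small). Applying the Cutting Lemma to isolate $\So$ yields $(M,h)\in\F_{(\Sigma,\gamma)}$ with the \emph{same} mean curvature $H_h$ on $\So$. At this point you are \emph{done}: the possible new boundary $\p M\setminus\So$ consists of stable minimal surfaces, of whatever genus, and that is exactly what Definition~\ref{df-F} allows. Since those pieces are minimal,
\[
\frac{1}{8\pi}\int_{\p\Omega} H_g\,d\sigma=\frac{1}{8\pi}\int_{\So} H_g\,d\sigma<\frac{1}{8\pi}\int_{\So} H_h\,d\sigma=\frac{1}{8\pi}\int_{\p M} H_h\,d\sigma\le\L_{(\Sigma,\gamma)}.
\]
There is no ``positive-genus case that genuinely needs an extra idea,'' no need to iterate, and no need to invoke Lemma~\ref{lem-filling} at all. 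The paper's proof is precisely this: double, then cut, then compare.
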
 
\begin{proof}
Employ Lemma \ref{lem-doubling} to double $(\Omega, g)$ across $\Sigma_H = \p \Omega \setminus \So$
if it is not empty.  For small  $\eta > 0$, the resulting manifold $(D, h)$ is going to have the mean curvature $H_h$ of $\So$ in $(D, h) $ satisfying $H_h > H_g$
and the mean curvature $ H_h' $ of $ \So'$ in $(D, h) $ satisfying $H_h' > 0$; here $\So'$ is the mirror image of $\So$ in $D$. The metric on $\So$ is still $\gamma$, while the metric on $\So'$ may have changed to some $\gamma'$. Cut $(D, h)$ via Lemma \ref{lem-cutting} to isolate the boundary component $\So$ and obtain $(M, h) \in \F_{(\Sigma,\gamma)}$. We compare the original fill-in $(\Omega, g)$ with the new fill-in $(M, h)$, which is by construction in $\F_{(\Sigma,\gamma)}$, and for which the new mean curvature $H_h$ on $\So$ exceeds the original mean curvature $H_g$ pointwise; as a result,
\[ \frac{1}{8 \pi} \int_{\p \Omega} H_g d \sigma <  \frac{1}{8 \pi} \int_{\p M} H_h d\sigma \leq \L_{(\Sigma, \gamma)} \text{,} \]
and the claim follows.
\end{proof}

\begin{proof}[Proof of Proposition \ref{thm-main-rigidity-single}]
Suppose 
$$ \int_{\p \Omega}  H_g d \sigma   =  8 \pi \L_{(\Sigma, \gamma)}$$
for some $ (\Omega, g) \in \F_{(\Sigma,\gamma)}$.
By Lemma \ref{lem-no-horizon},  $ \p \Omega = \So$. 
We claim  $ R(g) = 0 $, which again can be seen by  applying a  conformal deformation. 
Let $ u $ be the  (unique) positive solution to 
\be
\lf\{ 
\begin{array}{rcl}
\Delta_g u - \frac18 R(g) u & = & 0  \ \  \mathrm{on} \ \Omega \\
u & = & 1  \ \ \mathrm{at} \ \p \Omega  .
\end{array}
\ri.
\ee
Consider the conformally deformed metric
$  \hat g = u^4 g $.
Then $ R(\hat g) = 0 $, $ \hat g  = g $ at $ \p \Omega$, and 
$H_{\hat{g}} = H_g + 4 \frac{\p u}{\p \nu} $, 
where $ \nu $ is  the unit outward normal to $ \p \Omega$ in $(\Omega, g)$. If $ u $ is not identically a constant, then 
$ \frac{\p u}{\p \nu} > 0 $ at  $ \p \Omega$ 
by the strong maximum principle. Hence, $(\Omega , \hat g) \in \F_{(\Sigma,\gamma)}$ and
\[ \L_{(\Sigma, \gamma)} = \frac{1}{8\pi} \int_{\p \Omega}  H_g d \sigma < \frac{1}{8\pi} \int_{\p \Omega} H_{\hat{g}}  d \sigma \leq \L_{(\Sigma, \gamma)} \text{,} \]
a contradiction.
Therefore, $ u $ is constant, which shows $ R(g) = 0 $ on $ \Omega$. 

Now consider the space of metrics  $\bar g$ on $ \Omega$ given by 
\be
\M_g^0 = \{  \bar g \ | \ R( \bar g) = 0 \ \mathrm{and} \ \bar g |_{T \p \Omega} = g |_{T \p \Omega} \} ,
\ee
where $ \bar g |_{T \p \Omega }$ denotes  the induced metric on $ \p \Omega  $ from $  \bar g $. 
 Since  $ g \in \M_g^0 $ maximizes  the total boundary mean curvature in $\F$,
one knows that $ g$ is a critical point of the functional 
$$ \bar g \mapsto \int_{\p \Omega} H_{\bar{g}} d \sigma, \ \ \bar g \in  \M^0_g .$$
Therefore, by  \cite[Corollary 2.1]{MiaoShiTam09} (also cf. \cite[Lemma  4]{EMW}), 
$g$ is Ricci flat, and hence is flat  as $\Omega$ is 3-dimensional.

It remains for us to check the topological conclusion. First we claim that there are no closed embedded minimal surfaces (oriented or not) in the interior of $\Omega$. We proceed by contradiction.

If there were such a minimal surface $T$ then by compactness there would exist an interior smooth geodesic $\Gamma : [0,\ell] \to \Omega$ joining a pair of closest points between $T$ and $\partial \Omega$; $\Gamma(0) \in T$, $\Gamma'(0) \perp T$, $\Gamma(\ell) \in \partial \Omega$, and $\Gamma'(\ell) \perp \partial \Omega$. The second variation of the length of this geodesic summed among a basis of two unit normal variations $V_i$, $i = 1, 2$, is
\begin{align*} 
 \sum_{i=1}^2 \delta^2 \Gamma(V, V) & = -\int_0^{\ell} \Ric_g(\Gamma'(s), \Gamma'(s)) ds - H_{\partial \Omega,g}(\Gamma(\ell)) - H_{T,g}(\Gamma(0)) \\
    & = -H_{\partial \Omega,g}(\Gamma(\ell)) < 0 \text{,}
\end{align*}
since $g$ is flat and $T$ is minimal. This means $\Gamma$ is unstable, in contradiction with its minimizing nature. The claim follows.

Theorem 1 and Proposition 1  in \cite{MeeksSimonYau} tell us that in the absence of interior minimal surfaces, $\Omega$ is necessarily a  handlebody
with mean-convex boundary. 

Finally, when $\operatorname{genus}(\Sigma) = 0$ then we know $\Omega$ a genus-$0$ handlebody, i.e., a 3-ball. We've shown its metric $g$ is flat, so we can locally (and therefore globally since it is simply connected) immerse $\Omega$ in $\R^3$.
\end{proof}

\section{$\FF$ and $\LL$ for multiple prescribed surfaces}

\label{sec-multiple-components}

In this section we prove the theorems pertinent to the $\LL$ functional 
on compact, mean-convex, 3-manifolds with nonnegative scalar curvature and fixed boundary geometry consisting of possibly multiple components.

\begin{proof}[Proof of Theorem \ref{thm-main-additivity} (additivity)]
Let $(\Omega, g) \in \FF_{(\Sigma_1, \gamma_1), \ldots, (\Sigma_k, \gamma_k)}$, with $k \geq 2$. For each $j = 1, \ldots, k$,  denote the boundary component in $\partial \Omega$ corresponding to $\Sigma_j$ by $S_j$.

Let $ \phi <  0$ be a fixed function on $\Omega$ and let $w$ be the unique solution to 
\be
\lf\{ 
\begin{array}{rcll}
\Delta_{g} w - \frac18 R(g) w & = & \phi  &    \mathrm{on} \ \Omega \\
w & = & 0   &  \mathrm{at} \ \p \Omega .
\end{array}
\ri.
\ee
For small $\tau > 0$, consider the metrics 
 ${g}^{(\tau)} = ( 1 + \tau w)^4 g$.
 Then $g^{(\tau)} $ has  strictly positive scalar curvature on $ \Omega$,
 $\p \Omega$ has positive mean curvature $H_{g^{(\tau)}}$ in  $(\Omega, {g}^{(\tau)})$, and 
$$ \int_{S_j} H_{g^{(\tau)},j} d {\sigma}_j \geq \int_{S_j} H_{g,j} d\sigma_j - \varepsilon $$
if $\tau $ is small enough. 
Here $ \epsilon > 0$ is  any  given constant.
Write $\tilde{g}$ for this $g^{(\tau)}$.

Employing Lemma \ref{lem-cutting}, cut $(\Omega, \tilde{g})$ to isolate the boundary component $S_j$ and obtain 
$(\Omega_j, \tilde{g}_j) \in \F_{(\Sigma_j,\gamma_j)}$ with $R(\tilde{g}_j) > 0$, $H_{\tilde{g}_j} = H_{\tilde{g},j}$ on $S_j$ and $\p \Omega_j = S_j \cup T_j$, with $T_j$ a nonempty union of smooth, stable, oriented minimal surfaces. Moreover it follows from \cite{F-S} and $R(\tilde{g}_j) > 0$ that $T_j$ consists of 2-spheres. Next, employ Lemma \ref{lem-filling} (filling) once for every sphere in $T_j$ to replace $(\Omega_j, \tilde{g}_j)$ with $(M_j, h_j) \in \FF_{(\Sigma_j,\gamma_j)}$, with the mean curvature $H_{h_j}$ of $S_j$ in $(M_j, h_j)$ satisfying
\[ \int_{S_j} H_{h_j} d\sigma_j \geq \int_{S_j} H_{\tilde{g}_j} d\sigma_j - \varepsilon \geq \int_{S_j} H_{g,j} d\sigma_j - 2\varepsilon \text{.} \]
Since $(M_j, h_j) \in \FF_{(\Sigma_j, \gamma_j)}$, the left hand side is bounded from above by $\LL_{(\Sigma_j, \gamma_j)}$. Rearranging, we have
\[ \int_{S_j} H_{g,j} d\sigma_j \leq 2 \varepsilon + 8 \pi \LL_{(\Sigma_j, \gamma_j)} \text{.} \]
Letting $\varepsilon \downarrow 0$, we obtain \eqref{ineq-single-sup}.

Notice that if we had not let $\varepsilon \downarrow 0$, and instead carried out the procedure above for all $j = 1, \ldots, k$, and summed over $j$, then
\[ \int_{\p \Omega} H_g d\sigma \leq 2 k \varepsilon + 8 \pi \sum_{j=1}^k \LL_{(\Sigma_j,\gamma_j)} \text{.} \]
Letting $\varepsilon \downarrow 0$, and recalling that $(\Omega, g) \in \FF_{(\Sigma_1, \gamma_1), \ldots, (\Sigma_k, \gamma_k)}$ was arbitrary, we conclude
\[ \LL_{(\Sigma_1,\gamma_1),\ldots,(\Sigma_k,\gamma_k)} \leq \sum_{j=1}^k \LL_{(\Sigma_j,\gamma_j)} \text{.} \]

It remains to check the reverse direction ``$\geq$'' in \eqref{eq-additivity}. We will assume that all quantities on the right are finite, as
a similar argument carries through to the general case.
 Let $\varepsilon > 0$ be given. For each $j = 1, \ldots, k$, let $(\Omega_j, g_j) \in \FF_{(\Sigma_j, \gamma_j)}$ be such that
\[ \int_{\partial \Omega_j} H_{g_j} d\sigma_j \geq \LL_{(\Sigma_j, \gamma_j)} - \varepsilon \text{.} \]
On each $ \Omega_j$, let $ \phi_j <  0$ be a fixed function and let $w_j$ be the unique solution to 
\be
\lf\{ 
\begin{array}{rcll}
\Delta_{g_j} w_j - \frac18 R(g_j) w_j & = & \phi_j  &    \mathrm{on} \ \Omega_j \\
w_j & = & 0   &  \mathrm{at} \ \p \Omega_j  .
\end{array}
\ri.
\ee
For small $\tau > 0$, consider the metrics 
 ${g}^{(\tau)}_j = ( 1 + \tau w_j)^4 g_j$.
 Then $g^{(\tau)}_j $ has  strictly positive scalar curvature on $ \Omega_j$,
 $\p \Omega_j$ has positive mean curvature $H_{g_j^{(\tau)}}$ in  $(\Omega_j, {g}^{(\tau)}_j)$, and 
$$ \int_{\partial \Omega_j} H_{g_j^{(\tau)}} d {\sigma}_j \geq \LL_{(\Sigma_j, \gamma_j)} - 2\varepsilon $$
if $\tau $ is small enough.
Applying the connect-sum construction for positive scalar curvature manifolds (cf. \cite{SchoenYau79} and \cite{G-L}),
we obtain $\Omega = \Omega_1 \# \ldots \# \Omega_k$  endowed  with a metric ${g}$ of positive scalar curvature that coincides with 
${g}^{(\tau)}_j$ near each  $ \p \Omega_j$. In particular, $(\Omega, g) \in \FF_{(\Sigma, \gamma_1),\ldots, (\Sigma_k , \gamma_k) }$
and  it satisfies 
\[ \int_{\partial \Omega} {H}_g d {\sigma} \geq \sum_{j=1}^k \LL_{(\Sigma_j,\gamma_j)} - 2k\varepsilon \text{.} \]
Letting $\varepsilon \downarrow 0$, we conclude that  ``$\geq$'' holds.
\end{proof}

Using the tools developed so far, we can easily complete the proofs of Theorems \ref{thm-main-finiteness} (finiteness) and \ref{thm-main-rigidity} (rigidity).

\begin{proof}[Proof of Theorem \ref{thm-main-finiteness}]
Since $\FF_{(\Sigma,\gamma)} \subset \F_{(\Sigma,\gamma)}$, we see that
\be \label{eq-single-1}
 \LL_{(\Sigma,\gamma)} \leq \L_{(\Sigma,\gamma)} < \infty 
 \ee
for all Riemannian metrics $\gamma$ on a 2-sphere $ \Sigma$, 
where  the rightmost inequality follows from Proposition \ref{thm-main-finiteness-single}.
In Section \ref{sec-BY-mass} we will show that the leftmost inequality is, in fact, an equality.

When $k \geq 2$, and $\gamma_1, \ldots, \gamma_k$ are all metrics on a 2-sphere $\Sigma$, then
\[ \LL_{(\Sigma, \gamma_1), \ldots, (\Sigma, \gamma_k)} = \sum_{j=1}^k \LL_{(\Sigma,\gamma_j)} < \infty \]
by Theorem \ref{thm-main-additivity} and \eqref{eq-single-1}.
\end{proof}

\begin{proof}[Proof of Theorem \ref{thm-main-rigidity}]
If $k = 1$,  the proof of Proposition \ref{thm-main-rigidity-single} carries through verbatim in this case (except we don't need to invoke Lemma \ref{lem-no-horizon}, 
since we have no minimal boundary components).

If $k \geq 2$, then our assumption is that
\[ \frac{1}{8\pi} \int_{S_j} H_{g,j} d\sigma_j = \LL_{(\Sigma_j,\gamma_j)} \]
holds for some fixed $j \in \{1, \ldots, k\}$, where $S_j$ represents the boundary component corresponding to the $\Sigma_j$ on which we have equality.
Employ Lemma \ref{lem-cutting} to isolate the boundary component $S_j$ and obtain $(\Omega_j, g_j) \in \F_{(\Sigma_j, \gamma_j)}$, with $\p \Omega \setminus S_j \neq \emptyset$. Then employ Lemma \ref{lem-doubling} to double $\Omega_j$ across $\p \Omega_j \setminus S_j$ and obtain $(D_j, h_j)$. Writing $S_j'$ for the mirror image of $S_j$ under the doubling, we have $\p D_j = S_j \cup S_j'$. By construction, 
$(D_j,h_j) \in \FF_{(\Sigma_j,\gamma_j),(\Sigma_j,\gamma_j')}$ for some metric $\gamma_j'$, so by \eqref{ineq-single-sup} in Theorem \ref{thm-main-additivity},
\[ \frac{1}{8\pi} \int_{S_j} H_{h_j} d\sigma_j \leq \LL_{(\Sigma_j,\gamma_j)} \text{.} \]
Also by construction, the mean curvature $H_{h_j}$ of $S_j$ in $(D_j, h_j)$ exceeds the original mean curvature $H_{g,j}$, so,
\[ \LL_{(\Sigma_j,\gamma_j)} = \frac{1}{8\pi} \int_{S_j} H_{g,j} d\sigma_j < \frac{1}{8\pi} \int_{S_j} H_{h_j} d\sigma_j \leq \LL_{(\Sigma_j,\gamma_j)} \text{,} \]
a contradiction.
\end{proof}

\section{Application to $\m(\Sigma;\Omega)$}
\label{sec-BY-mass}

We recall and prove Theorem \ref{df-BY-analog}.

\begin{thm*}
Given a compact Riemannian 3-manifold $(\Omega, g)$ 
with nonnegative scalar curvature, and a mean-convex  boundary $ \Sigma$ which is a topological 2-sphere, define
\[ \m  (\Sigma; \Omega) = \L_{ (\Sigma, \gamma) } - \frac{1}{8\pi} \int_{\Sigma} H_g d \sigma . \]
Then $\m(\Sigma;\Omega)$ is
\begin{enumerate}
\item[(a)] well-defined and finite,
\item[(b)] nonnegative, i.e.,  $\m (\Sigma; \Omega) \ge 0$, and
\item[(c)] $\m  (\Sigma; \Omega) = 0$ only if $ (\Omega, g)$ is a flat 3-ball immersed in $\R^3$.
\end{enumerate}
Finally, when $\Sigma$ has positive Gauss curvature, $\m  (\Sigma; \Omega) = \mby (\Sigma; \Omega)$.
\end{thm*}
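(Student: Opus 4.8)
The plan is to establish the four claims (a)--(c) plus the Brown-York comparison by assembling the pieces already proved in the paper. For \textbf{(a)}, the key point is that $\L_{(\Sigma,\gamma)}$ is well-defined and finite: $\F_{(\Sigma,\gamma)}$ is nonempty because $(\Omega,g)$ itself (with $\Sigma_H=\emptyset$) is a valid fill-in, so the supremum is not over the empty set; and $\L_{(\Sigma,\gamma)}<\infty$ is exactly Proposition \ref{thm-main-finiteness-single}, since $\Sigma$ is a topological $2$-sphere. Thus $\m(\Sigma;\Omega)=\L_{(\Sigma,\gamma)}-\tfrac{1}{8\pi}\int_\Sigma H_g\,d\sigma$ is a difference of two finite real numbers, hence well-defined and finite.

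For \textbf{(b)}, nonnegativity is immediate from the definition of the supremum: since $(\Omega,g)\in\F_{(\Sigma,\gamma)}$ (again taking $\Sigma_H=\emptyset$), the quantity $\tfrac{1}{8\pi}\int_\Sigma H_g\,d\sigma$ is one of the competitors in \eqref{eq-def-L}, so it is $\le\L_{(\Sigma,\gamma)}$, i.e. $\m(\Sigma;\Omega)\ge0$. For \textbf{(c)}, suppose $\m(\Sigma;\Omega)=0$, i.e. $(\Omega,g)$ attains the supremum $\L_{(\Sigma,\gamma)}$. Then Proposition \ref{thm-main-rigidity-single} applies and tells us $(\Omega,g)$ is a mean-convex handlebody with flat interior. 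To conclude it is a $3$-ball immersed in $\R^3$, I still need $\operatorname{genus}(\Sigma)=0$: this is given, since $\Sigma$ is assumed to be a topological $2$-sphere, so the "in particular" clause of Proposition \ref{thm-main-rigidity-single} gives that $(\Omega,g)$ is an Alexandrov embedded (hence immersed) mean-convex flat $3$-ball in $\R^3$.

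For the final assertion, when $\Sigma$ has positive Gauss curvature, I need $\L_{(\Sigma,\gamma)}=\tfrac{1}{8\pi}\int_\Sigma H_0\,d\sigma$, where $H_0$ is the mean curvature of the Weyl embedding of $(\Sigma,\gamma)$ into $\R^3$; then $\m(\Sigma;\Omega)=\tfrac{1}{8\pi}\int_\Sigma(H_0-H_g)\,d\sigma=\mby(\Sigma;\Omega)$ by \eqref{eq-BY}. The inequality $\L_{(\Sigma,\gamma)}\le\tfrac{1}{8\pi}\int_\Sigma H_0\,d\sigma$ follows from Theorem \ref{thm-ShiTam} (Shi-Tam): every competitor $(M,h)\in\F_{(\Sigma,\gamma)}$ with $\Sigma_H=\emptyset$ satisfies $\int_{\partial M}H_h\,d\sigma\le\int_\Sigma H_0\,d\sigma$; competitors with nonempty minimal inner boundary $\Sigma_H$ are handled either by the doubling argument of Lemma \ref{lem-no-horizon} (which shows such competitors never achieve the sup and reduces to the $\Sigma_H=\emptyset$ case) or by applying a version of Shi-Tam allowing minimal inner boundary. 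The reverse inequality is witnessed by the flat fill-in: the convex body bounded by the Weyl embedding lies in $\F_{(\Sigma,\gamma)}$ and its boundary mean curvature integral is exactly $\int_\Sigma H_0\,d\sigma$.

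The main obstacle is the last step---identifying $\L_{(\Sigma,\gamma)}$ with $\tfrac{1}{8\pi}\int_\Sigma H_0\,d\sigma$ in the positive-Gauss-curvature case---because one must rule out that a fill-in with minimal inner boundary components beats the flat convex body. I expect this to be dispatched by first observing (via Lemma \ref{lem-no-horizon}) that maximizing sequences may be taken with $\Sigma_H=\emptyset$, after which Theorem \ref{thm-ShiTam} applies directly; alternatively one invokes the extension of Shi-Tam to manifolds with minimal boundary (as in Proposition \ref{prop-ShiTam-H-1}, but in the $\kappa\to0$ Euclidean limit using the Euclidean positive mass theorem with horizon boundary). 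A secondary subtlety is confirming that the enlarged class $\F$ (rather than $\FF$) does not change $\L_{(\Sigma,\gamma)}$; this is asserted in the remark following Definition \ref{df-F} and will be proved here using Lemma \ref{lem-no-horizon} together with Lemma \ref{lem-filling}, so that $\LL_{(\Sigma,\gamma)}=\L_{(\Sigma,\gamma)}$ and the definition of $\m$ is robust under this replacement.
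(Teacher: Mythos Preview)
Your argument for (a), (b), (c) is correct and matches the paper exactly: finiteness via Proposition~\ref{thm-main-finiteness-single}, nonnegativity because $(\Omega,g)$ is itself a competitor in $\F_{(\Sigma,\gamma)}$, and rigidity via Proposition~\ref{thm-main-rigidity-single} together with the genus-zero hypothesis.

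For the Brown--York comparison you are also on the right track, and the paper does essentially what you outline: it first uses Theorem~\ref{thm-ShiTam} to get $8\pi\,\LL_{(\Sigma,\gamma)}=\int_\Sigma H_0\,d\sigma$ over the smaller class $\FF$, and then invokes the equality $\LL_{(\Sigma,\gamma)}=\L_{(\Sigma,\gamma)}$ (stated and proved as Proposition~\ref{prop-invariance}). One small point to tighten in your sketch: your plan to prove $\LL=\L$ ``using Lemma~\ref{lem-no-horizon} together with Lemma~\ref{lem-filling}'' does not quite close as written. The doubling step in Lemma~\ref{lem-doubling} (which underlies Lemma~\ref{lem-no-horizon}) produces a \emph{scalar-flat} metric, and after cutting, the resulting stable minimal boundary components need not be $2$-spheres unless $R>0$ strictly; but Lemma~\ref{lem-filling} requires stable minimal $2$-spheres with $R>0$ along them. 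The paper handles this by routing through Theorem~\ref{thm-main-additivity}, whose proof first perturbs conformally to strictly positive scalar curvature before cutting and filling. If you insert that same perturbation (solve $\Delta_g w-\tfrac18 R(g)w=\phi<0$ with $w=0$ on the boundary, pass to $(1+\tau w)^4 g$), your proposed chain double\,$\to$\,cut\,$\to$\,fill goes through and yields $\L_{(\Sigma,\gamma)}\le\LL_{(\Sigma,\gamma)}$, which together with the trivial inclusion $\FF\subset\F$ gives equality. Your alternative suggestion of applying a Shi--Tam type inequality directly to competitors with minimal inner boundary is also viable, but the paper does not take that route.
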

\begin{proof}
    Finiteness follows from Proposition \ref{thm-main-finiteness-single}, and nonnegativity is true by the definition of $\L_{(\Sigma, \gamma)}$. The rigidity statement follows from Proposition \ref{thm-main-rigidity-single}. Finally, when $\Sigma$ has positive Gauss curvature, Theorem \ref{thm-ShiTam} implies 
$$ \int_{\Sigma} H_0 d\sigma = \sup_{(M, h) \in \FF_{(\Sigma, \gamma)}} \int_{\partial M} H_h d\sigma = 8 \pi \LL_{(\Sigma, \gamma)} $$ 
and the result follows by Proposition \ref{prop-invariance} below.
\end{proof}

\begin{prop}
\label{prop-invariance}
    Let $\Sigma$ be a closed, connected, orientable surface with a Riemannian metric $\gamma$. Then
    \[ \LL_{(\Sigma,\gamma)} = \L_{(\Sigma,\gamma)} \text{.} \]
    In other words, $\F_{(\Sigma,\gamma)}$ and $\L_{(\Sigma,\gamma)}$ can be replaced by $\FF_{(\Sigma,\gamma)}$ and $\LL_{(\Sigma,\gamma)}$ in the definition of $\m(\Sigma;\Omega)$.
\end{prop}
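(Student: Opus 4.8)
The plan is to prove $\LL_{(\Sigma,\gamma)} = \L_{(\Sigma,\gamma)}$ by showing both inequalities. One direction, $\LL_{(\Sigma,\gamma)} \leq \L_{(\Sigma,\gamma)}$, is immediate from the inclusion $\FF_{(\Sigma,\gamma)} \subseteq \F_{(\Sigma,\gamma)}$, which is already recorded in the excerpt. So the substance is the reverse inequality $\L_{(\Sigma,\gamma)} \leq \LL_{(\Sigma,\gamma)}$, and the strategy is: given an arbitrary fill-in $(\Omega, g) \in \F_{(\Sigma,\gamma)}$, produce, for every $\varepsilon > 0$, a genuine $\FF$-fill-in $(D, h) \in \FF_{(\Sigma,\gamma)}$ whose boundary total mean curvature is within $\varepsilon$ of $\frac{1}{8\pi}\int_{\So} H_g \, d\sigma$. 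Taking a supremum over all $(\Omega,g) \in \F_{(\Sigma,\gamma)}$ and letting $\varepsilon \downarrow 0$ then gives the claim.

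To build $(D,h)$, I would follow exactly the recipe used in the proof of Theorem \ref{thm-main-additivity}, specialized to one boundary component. First, if $\p\Omega = \So$ (no minimal boundary pieces), then $(\Omega, g)$ is already in $\FF_{(\Sigma,\gamma)}$ and there is nothing to do. Otherwise $\Sigma_H = \p\Omega \setminus \So$ is nonempty. I would first apply a small conformal perturbation $g^{(\tau)} = (1 + \tau w)^4 g$, where $w$ solves $\Delta_g w - \tfrac18 R(g) w = \phi$ with $\phi < 0$ fixed and $w = 0$ on $\p\Omega$, to arrange strictly positive scalar curvature while changing $\int_{\So} H \, d\sigma$ by at most $\varepsilon$ — this is needed so that the minimal boundary components produced by the next step are 2-spheres, via \cite{F-S}. (Note the boundary metric $\gamma$ on $\So$ is unchanged since $w = 0$ there.) Then, since we are already down to one prescribed component, I can skip the Cutting lemma and directly invoke Lemma \ref{lem-filling} (filling), once for each minimal sphere in $\Sigma_H$, to glue on 3-balls of positive scalar curvature and obtain $(D, h) \in \FF_{(\Sigma,\gamma)}$ with $H_h > H_{g^{(\tau)}} - \varepsilon$ on $\So$; hence $\int_{\So} H_h \, d\sigma \geq \int_{\So} H_g \, d\sigma - C\varepsilon$ for a universal constant, which is what we want.

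Assembling this: $\frac{1}{8\pi}\int_{\So} H_g \, d\sigma \leq \frac{1}{8\pi}\int_{\So} H_h \, d\sigma + C'\varepsilon \leq \LL_{(\Sigma,\gamma)} + C'\varepsilon$, and letting $\varepsilon \downarrow 0$ and taking the supremum over $(\Omega,g) \in \F_{(\Sigma,\gamma)}$ gives $\L_{(\Sigma,\gamma)} \leq \LL_{(\Sigma,\gamma)}$. Combined with the trivial inequality, this proves $\LL_{(\Sigma,\gamma)} = \L_{(\Sigma,\gamma)}$. I would also remark that this equality, together with Proposition \ref{thm-main-finiteness-single}, retroactively upgrades the inequality $\LL_{(\Sigma,\gamma)} \leq \L_{(\Sigma,\gamma)} < \infty$ from the proof of Theorem \ref{thm-main-finiteness} to the equality promised there.

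There is not really a serious obstacle here, since all the machinery — Lemma \ref{lem-filling}, the conformal-deformation trick, and the fact that stable minimal boundary components in positive scalar curvature are spheres — has been set up earlier in the paper; the only point requiring a little care is the order of operations, namely that one must first achieve strictly positive scalar curvature (so the filling lemma's hypothesis that the minimal components be stable \emph{2-spheres} is met) \emph{before} applying Lemma \ref{lem-filling}, and one must check that the $\varepsilon$-losses from the conformal deformation and from the filling step are independently controllable. If one wished to avoid even the conformal step, one could instead observe that $\Sigma_H$ in an $\F$-fill-in is already a union of stable minimal surfaces and that, by \cite{F-S} and the fact $R(g) \geq 0$, each component is either a 2-sphere or a flat totally geodesic torus; a separate (easy) argument or a slight enlargement of the conformal perturbation handles the torus case, but invoking the $R(g) > 0$ deformation as above is cleaner.
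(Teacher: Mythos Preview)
Your argument has a genuine gap at the step where you invoke Lemma~\ref{lem-filling}. That lemma requires the horizon $\Sigma_H$ to consist of \emph{stable} minimal 2-spheres with $R(g) > 0$ there, but Definition~\ref{df-F} only asserts that $\Sigma_H$ is minimal---no stability, no genus restriction, and $R(g)$ only nonnegative. Your conformal perturbation $g^{(\tau)} = (1+\tau w)^4 g$ with $w = 0$ on all of $\partial\Omega$ does not repair this: since $w > 0$ in the interior and $w = 0$ on $\partial\Omega$, the Hopf lemma gives $\partial_\nu w < 0$, whence $H_{g^{(\tau)}} = 4\tau\,\partial_\nu w < 0$ on $\Sigma_H$. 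So after the deformation $\Sigma_H$ is no longer minimal, $(\Omega, g^{(\tau)}) \notin \F_{(\Sigma,\gamma)}$, and Lemma~\ref{lem-filling} is inapplicable. (In the proof of Theorem~\ref{thm-main-additivity} this same deformation is followed by the Cutting Lemma, which manufactures \emph{new} stable minimal surfaces in the interior; \cite{F-S} then applies to those. Here you have explicitly skipped cutting, and you cannot reinstate it either, because $\Sigma_H$ now has mean curvature vector pointing outward and is no longer a barrier for the area minimization.) Your fallback in the final paragraph has the same defect: you write that ``$\Sigma_H$ in an $\F$-fill-in is already a union of stable minimal surfaces,'' but stability is simply not part of Definition~\ref{df-F}, so neither \cite{F-S} nor the sphere/torus dichotomy is available.

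The paper avoids this entirely by using Lemma~\ref{lem-doubling} in place of Lemma~\ref{lem-filling}. Doubling across $\Sigma_H$ requires only that $\Sigma_H$ be minimal---no stability, no topological restriction, no strict positivity of $R(g)$---and produces $(D,h) \in \FF_{(\Sigma,\gamma),(\Sigma,\gamma')}$ with $H_h > H_g$ on $\So$. Inequality~\eqref{ineq-single-sup} of Theorem~\ref{thm-main-additivity}, already established, then gives $\int_{\So} H_h\,d\sigma \le 8\pi\,\LL_{(\Sigma,\gamma)}$, and hence $\int_{\partial\Omega} H_g\,d\sigma = \int_{\So} H_g\,d\sigma < 8\pi\,\LL_{(\Sigma,\gamma)}$, which is exactly the bound you need.
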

\begin{proof}
It suffices to show $ \L_{(\Sigma,\gamma)} \le \LL_{(\Sigma,\gamma)} $.
Pick $(\Omega, g) \in \F_{(\Sigma, \gamma)}$ 
such that $\Sigma_H = \p \Omega \setminus \So$ is nonempty.
Let $ h$ be a metric given by Lemma \ref{lem-doubling} on the doubling $ D$ of $ \Omega$ across $\Sh$.
Then $ H_h > H_g $ on $ \So $. 
Applying Theorem \ref{thm-main-additivity} to $(D, h)$ at $\So$, we have 
$$ \int_{\So} H_h d \sigma \le 8 \pi \LL_{(\Sigma, \gamma)} .$$
Therefore, 
$$ \int_{\p \Omega} H_g d \sigma = \int_{\So}  H_g d \sigma <  \int_{\So} H_h d \sigma \le 8 \pi \LL_{(\Sigma, \gamma)}, $$ 
which proves the claim.
\end{proof}

\end{document}